\documentclass[12pt]{amsart}
\usepackage{geometry} 
\geometry{a4paper} 
\newtheorem{theorem}{Theorem}[section]
\newtheorem{prop}{Proporsition}[section]
\newtheorem{cor}{Corollary}[section]

\newtheorem{lem}{Lemma}[section]

\newcommand{\bz}{\mathbb{Z}}
\newcommand{\bq}{\mathbb{Q}}

\newcommand{\dd}{\mathbf{dim}}

\newcommand{\bF}{\mathbf{F}}

\newcommand{\bhom}{\mathbf{Hom}}

\newcommand{\mc}{\mathcal{C}}
\title{Tate Conjecture And Finiteness of Abelian Varieties over Finite Field}
\author{Anningzhe Gao}
\date{} 

\begin{document}

\maketitle
\begin{abstract}
In this paper we will prove that Tate conjecture of abelian varieties over finite field is equivalent to the finiteness of isomorphism classes of abelian varieties with a fixed dimension. We give a different approach with Zarhin's result \cite{Zarhin}.
\end{abstract}
\tableofcontents

\section{Introduction}
Through the paper, $k$ will denote a finite field of $char\ k=p$ and $\bar{k}$ means the algebraic closure of $k$. In \cite{tate1966endomorphisms}, Tate proved the following famous theorem 
\begin{theorem}[Tate]{\label{Tateconj}}
Let $k=\bF_{q}$ be a finite field where $q$ is a power of a prime $p$. Let $A,B$ be two abelian varieties over $k$. Let $G=Gal(\bar{k}/k)$ be the absolute Galois group of $k$. If $l$ is a prime and $l\neq p$, then we have the isomorphism
$$\bhom_{AV}(A,B)\otimes \bz_l\cong\bhom_{\bz_l[G]}(T_l(A),T_l(B))$$
Here $T_l$ is the Tate module of an abelian variety.
\end{theorem}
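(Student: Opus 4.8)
The plan is to carry out Tate's original argument, organized as three reductions that terminate at a single finiteness input --- in effect the very statement whose equivalence with the conjecture is this paper's subject. First I would prove injectivity of the natural map $\bhom_{AV}(A,B)\otimes\bz_l\to\bhom_{\bz_l[G]}(T_l(A),T_l(B))$ and that it identifies the source with a \emph{saturated} $\bz_l$-submodule of the target. This rests on Weil's theorem that $\bhom_{AV}(A,B)$ is a finitely generated free $\bz$-module, together with the elementary fact that a homomorphism $A\to B$ annihilating $A[l^n]$ is divisible by $l^n$ in $\bhom_{AV}(A,B)$ (it factors through $[l^n]\colon A\to A$). These give injections $\bhom_{AV}(A,B)/l^n\hookrightarrow\bhom(A[l^n],B[l^n])$ for every $n$; the inverse limit yields injectivity, and the same divisibility fact shows that an element of $\bhom_{AV}(A,B)\otimes\bq_l$ whose action carries $T_l(A)$ into $T_l(B)$ already lies in $\bhom_{AV}(A,B)\otimes\bz_l$. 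Hence it is enough to prove that $\bhom_{AV}(A,B)\otimes\bq_l\to\bhom_{\bq_l[G]}(V_l(A),V_l(B))$ is an isomorphism, where $V_l(A)=T_l(A)\otimes_{\bz_l}\bq_l$.

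Next, by the standard device of replacing $(A,B)$ with $A\times B$ and extracting the block of the resulting endomorphism algebras that corresponds to $\bhom_{AV}(A,B)$, it suffices to treat the endomorphism case: $\mathbf{End}(A)\otimes\bq_l\xrightarrow{\ \sim\ }\bhom_{\bq_l[G]}(V_l(A),V_l(A))$ for an arbitrary abelian variety $A/k$. Here $G\cong\widehat{\bz}$ is topologically generated by the Frobenius, which acts continuously on $V_l(A)$ through the Frobenius endomorphism $\pi\in\mathbf{End}(A)$; so the right-hand side equals the commutant $F_l$ of $\bq_l[\pi]$ in $\mathbf{End}_{\bq_l}(V_l(A))$. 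Set $E_l=\mathbf{End}(A)\otimes\bq_l$, which is semisimple by the Poincar\'e complete reducibility theorem and in which $\pi$ is central; thus $\bq_l[\pi]\subseteq E_l\subseteq F_l$. Using that $\pi$ acts semisimply on $V_l(A)$, a standard argument with the double-centralizer theorem and with graphs of endomorphisms --- carried out over the powers $A^n$ --- reduces the desired equality $E_l=F_l$ to the lemma below.

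The lemma to prove is that every $G$-stable $\bq_l$-subspace $W\subseteq V_l(A)$ has the form $u\,V_l(A)$ for some $u\in E_l$. To this end I would attach to $W$, for each $n\geq 1$, the $\bz_l[G]$-submodule $X_n=(T_l(A)\cap W)+l^nT_l(A)$ of $T_l(A)$; since $l^nT_l(A)\subseteq X_n\subseteq T_l(A)$, this is the same as a finite \'etale $k$-subgroup scheme $C_n\subseteq A[l^n]$, hence an isogeny $f_n\colon A\to A_n:=A/C_n$ of $l$-power degree, and all the $A_n$ lie in the $k$-isogeny class of $A$. \emph{This is the point where the finiteness enters}: granting that a $k$-isogeny class of abelian varieties over $k$ contains only finitely many $k$-isomorphism classes, infinitely many of the $A_n$ are mutually $k$-isomorphic, and comparing two such through the maps $f_n$ and their dual isogenies produces, after clearing a power of $l$, an element $u\in\mathbf{End}(A)\otimes\bq_l$ with $u\,V_l(A)=W$. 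With the lemma in hand, the two earlier reductions reassemble the full statement.

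The genuine difficulty --- the step I expect to be the main obstacle --- is the finiteness invoked above. The classical route to it combines the finiteness of $k$-isomorphism classes of polarized abelian varieties of a fixed dimension and bounded polarization degree (the relevant parameter space being of finite type, by a Chow-scheme argument, and $k$ being finite) with Zarhin's trick that $(C\times\widehat{C})^{4}$ is principally polarizable, which reduces control of an isogeny class to the finite-type moduli of principally polarized abelian varieties of a fixed dimension. Since the aim of this paper is precisely to recast the relation between this finiteness and the Tate conjecture, I expect the treatment here to diverge from Tate's exactly at this step --- either substituting a different finiteness input or re-deriving the classical one by another route.
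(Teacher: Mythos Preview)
The paper does not contain a proof of this theorem. Theorem~\ref{Tateconj} is stated in the introduction as a known result of Tate, with a citation to \cite{tate1966endomorphisms}, and is used thereafter as a black box. The only remark the paper makes about its proof is to identify the finiteness input Tate used (finitely many isomorphism classes of abelian varieties over $k$ with bounded dimension and polarization degree), precisely in order to motivate the paper's actual goal, which is the \emph{converse} implication (Theorem~\ref{main}).

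Your sketch is a faithful outline of Tate's original 1966 argument and is correct as far as it goes: the injectivity/saturation step, the reduction to the endomorphism case via $A\times B$, the identification of $\bhom_{\bq_l[G]}(V_l A,V_l A)$ with the commutant of $\bq_l[\pi]$, and the key lemma that every $G$-stable subspace of $V_l(A)$ is $uV_l(A)$ for some $u\in\mathbf{End}(A)\otimes\bq_l$, established by manufacturing quotients $A/C_n$ and invoking finiteness. Your closing paragraph anticipates that the paper will ``diverge from Tate's exactly at this step''; in fact the paper does not revisit this direction at all. It takes Theorem~\ref{Tateconj} as given and spends Sections~3--5 proving that Tate's theorem implies the finiteness statement, not the other way around. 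So there is no alternative proof in the paper to compare your proposal against.
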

In the proof of the theorem, Tate used the following fact: Fix an integer $d$, a prime $l$ and an abelian variety $A$ over $k$, the isomorphism classes of abelian varieties which admits a polarization of degree $d^2$ and an isogeny $B\rightarrow A$ of degree $l^n$ for some $n$ is finite. If we denote $g=\dd A$, then since there is an isogeny between $B$ and $A$, so $\dd B=g$. It is well known that over $k$ (a finite field), the isomorphism classes of abelian variety of dimension $g$ is finite, so we have 
$$Finiteness\ of\ isomorphism\ classes\ of\ abelian\ varieties\ over\ k\ with\ a\ fixed\ dimension$$
$$\Longrightarrow Tate\ conjecture\ of\ abelian\ varieties\ over\ k$$

In this paper we will prove the converse direction, i.e.

\begin{theorem}[Main Theorem]{\label{main}}
Tate conjecture of abelian varieties over $k$ implies that thaere are only finitely many abelian varieties of dimension $g$ over $k$.
\end{theorem}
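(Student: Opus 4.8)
The plan is to run a counting argument that turns the Tate isomorphism into a finiteness statement via the (known) finiteness of semisimple $\bz_l[G]$-module structures on a fixed lattice, combined with Zarhin's trick to produce polarizations of controlled degree. Fix a dimension $g$ and a prime $l\neq p$. Since $G=Gal(\bar k/k)$ is topologically generated by the Frobenius $\mathrm{Fr}$, a $\bz_l[G]$-module structure on $\bz_l^{2g}$ is the same as a single matrix $\mathrm{Fr}\in GL_{2g}(\bz_l)$, and by the Weil bounds the characteristic polynomial of Frobenius of any $g$-dimensional abelian variety over $k$ lies in a \emph{finite} set of integer polynomials (bounded degree, bounded coefficients). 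So up to the finitely many choices of that characteristic polynomial, and up to the fact that there are only finitely many finite fields of size $q$ with $q^g$-torsion... more precisely, one first reduces to a fixed $k$: if the statement is false there are infinitely many $A$ over some fixed $k$ of dimension $g$.

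\medskip

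Next I would extract a homomorphism out of the Tate isomorphism. The key point is: if $A,B$ are $g$-dimensional abelian varieties over $k$ with $T_l(A)\cong T_l(B)$ as $\bz_l[G]$-modules, then by Theorem \ref{Tateconj} the group $\bhom_{AV}(A,B)\otimes\bz_l\cong\bhom_{\bz_l[G]}(T_l(A),T_l(B))$ contains an element mapping to a chosen $\bz_l[G]$-isomorphism; since isogenies are dense, this yields an actual isogeny $A\to B$ whose kernel is an $l$-group, in fact of $l$-power order bounded once we clear denominators. Thus all $g$-dimensional $A$ over $k$ with a \emph{fixed} Frobenius characteristic polynomial $f$ fall into one isogeny class, and within that isogeny class any two are related by an $l$-power isogeny. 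Now invoke the fact recalled by Tate in the excerpt (the one used in his original proof): for fixed $d$, fixed $l$, and a fixed abelian variety $A$, there are only finitely many isomorphism classes of abelian varieties $B$ admitting both a polarization of degree $d^2$ and an $l^n$-isogeny $B\to A$. To be able to apply it we need a uniform polarization degree, which is exactly what Zarhin's trick supplies: $(B\times\widehat B)^4$ carries a principal polarization, so replacing $B$ by $(B\times\widehat B)^4$ (dimension $8g$) we get $d=1$; and an $l$-power isogeny $A\to B$ induces an $l$-power isogeny $(A\times\widehat A)^4\to(B\times\widehat B)^4$.

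\medskip

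Assembling: suppose for contradiction there are infinitely many isomorphism classes of $g$-dimensional abelian varieties over $k$. Their Frobenius polynomials take only finitely many values, so infinitely many of them, say $\{B_i\}_{i\in I}$, share one polynomial $f$; fix $A=B_{i_0}$. By the previous paragraph each $B_i$ is $l$-power-isogenous to $A$, hence $(B_i\times\widehat{B_i})^4$ is principally polarized and $l$-power-isogenous to the fixed $(A\times\widehat A)^4$. By Tate's finiteness fact (with $d=1$) the $(B_i\times\widehat{B_i})^4$ fall into finitely many isomorphism classes; since $B_i$ is a factor of $(B_i\times\widehat{B_i})^4$ and an abelian variety has only finitely many abelian subvarieties up to isomorphism (Poincaré reducibility over the fixed base, or the fact that factors of a fixed variety form finitely many classes), the $B_i$ themselves fall into finitely many classes, a contradiction.

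\medskip

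The main obstacle I anticipate is the middle step: producing from the abstract $\bz_l$-module isomorphism $\bhom_{AV}(A,B)\otimes\bz_l\xrightarrow{\ \sim\ }\bhom_{\bz_l[G]}(T_l A,T_l B)$ an \emph{honest} isogeny $A\to B$ over $k$ with $l$-power kernel of \emph{bounded} order, uniformly over the infinitely many $B_i$. One must check that $\bhom_{AV}(A,B)$ already contains a quasi-isogeny whose $l$-adic realization is invertible (density of isomorphisms in the $l$-adic completion, using that $\bhom(A,B)$ is finitely generated), clear the denominator to land in $\bhom(A,B)$ itself, and then bound the degree of the resulting isogeny by the index of its image, which is controlled by $f$ and $l$ alone. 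Once that uniformity is in hand, the reduction to Tate's own finiteness input plus Zarhin's trick is routine.
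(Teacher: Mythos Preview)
Your middle step has the direction reversed. From $T_l(A)\cong T_l(B)$ as $\bz_l[G]$-modules together with the Tate isomorphism $\bhom_{AV}(A,B)\otimes\bz_l\cong\bhom_{\bz_l[G]}(T_lA,T_lB)$, approximating a chosen $\bz_l[G]$-isomorphism by an honest homomorphism produces an isogeny whose induced map on $T_l$ is still an isomorphism; hence its kernel has \emph{trivial} $l$-part, i.e.\ degree prime to $l$, not an $l$-group. You therefore cannot feed these isogenies into Tate's finiteness hypothesis, which demands an $l^n$-isogeny to the fixed target. You also never justify the hypothesis $T_l(A)\cong T_l(B)$: sharing a Frobenius characteristic polynomial only gives $V_l(A)\cong V_l(B)$, and integrally there may be several non-isomorphic $G$-stable lattices. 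Finally, invoking Tate's finiteness hypothesis is structurally awkward here: over a finite field its proof is exactly the moduli-space finiteness of $\mathcal{A}_{g,d}(\bF_q)$, and that input together with Zarhin's trick and the ``finitely many direct factors'' fact you use at the end already gives the conclusion without the Tate conjecture doing any work.

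The paper runs the argument in the correct direction and varies the prime. It first shows (Proposition~\ref{mainprop}) that within a fixed isogeny class the tuple $(T_l(B))_l$, including the Dieudonn\'e module at $p$, assumes only finitely many values: for $l$ larger than a bound depending only on $P_{\pi_A}$ the lattice is pinned down by the characteristic polynomial, while for the finitely many small $l$ a direct lattice count suffices. Then, for any $B$ with $T_l(B)\cong T_l(A)$ for \emph{every} prime $l$, the Tate conjecture yields for each $l$ an isogeny $B\to A$ of degree prime to $l$; a short combination lemma (Lemma~\ref{tech}) upgrades this to two isogenies $B\to A$ of coprime degree, which exhibits $B$ as a direct summand of $A\times A$ (Lemma~\ref{mainlem}), and one concludes via the finiteness of direct factors of a fixed abelian variety. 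The essential maneuver is thus to manufacture prime-to-$l$ isogenies for all $l$ and combine them, rather than to seek a single $l$-power isogeny.
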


This result is first proved by Zarhin \cite{Zarhin}. We will give a different approach to this result. 

The organization of this paper is as follows: In Section 2 we will recall some basic properties of abelian varieties over $k$, in Section 3 we will use Tate isogeny theorem to prove there are only finitely many isogeny classes of abelian varieties, in Section 4 and Section 5 we will show there are finitely many abelian varieties isogenous to a fixed abelian variety.

$\mathbf{Notations}$: We will use $k$ to represent a finite field of characteristic $p$, $\bar{k}$ its algebraic closure, $G=Gal(\bar{k}/k)$ the absolute Galois group. $\sigma$ is the Frobenius element. For a projective variety $X$ over $k$, we use $\pi_X$ to denote the Frobenius morphism of $X$.

I would like to thank my advisor, Martin Olsson, introduced me this interesting topic and lots of useful discussions. And Daniel Bragg and Thomas Preu for helpful discussion on some details.

\section{Some Basic Facts About Abelian Varieties}

In this section we recall the Tate module of an abelian variety and the $p-$divisible group.

Let $A$ be an abelian variety over $k$ with $\dd A=g$. Choose $l$ a prime number with $l\neq p$. We know that if $(p,n)=1$, the morphism $n:A\to A$ is a separable isogeny of degree $n^{2g}$, denote $A[n]=Ker(n:A\to A)$, then $A[n](\bar{k})\cong(\bz/n\bz)^{2g}$. By definition the Tate module $$T_l(A)=\varprojlim_n A[l^n](\bar{k})$$
We know $T_l(A)\cong \bz_l^{2g}$ non-canonically. The Galois group $G$ acts on $T_l(A)$ in a natural way. This action is continuous, since the Frobenius is an topological generator of $G$ so the action of  $\sigma$ determines the action of $G$. 
The Frobenius morphism $\pi_A:A\to A$ is a morphism in $\bhom_{AV}(A,A)$, by definition the image of $\pi_A$ under the isomorphism  
$$\bhom_{AV}(A,A)\otimes \bz_l\cong\bhom_{\bz_l[G]}(T_l(A),T_l(A))$$
is  $\sigma$.

For $\pi_A$, we define a function $P_{\pi_A}(n)=deg(n-\pi_A)$, then we know $P_{\pi_A}$ is a polynomial of degree $2g$ with $\bz$ coefficients. It is the same as the characteristic polynomial of $\sigma$ on $V_l(A)=T_l(A)\otimes\bq_l$. In particular the characteristic polynomial of $\sigma$ on $V_l(A)$ is independent of $l$.

In the case if $l=p$, since now $p:A\to A$ is not separate, things are a little different. We use the $p-$divisible group in this case. We define

$$A[p^\infty]=\varinjlim_n A[p^n]$$

To introduce the Tate $p-$conjecture, we need to use definition of Dieudonne ring and Dieudonne modules, for details, see \cite{waterhouse1968abelian} and \cite{pink2004finite}.

Let $D_k$ be the Dieudonne ring of $k$, it is a non-commutative associative $W(k)-$algebra ($W(k)$ is the ring of Witt vectors) with two generators $F,V$ satisfying the following conditions:
$$FV=VF=p$$
$$F(c)=\phi(c)F$$
$$cV=V\phi(c)$$
for any $c\in W(k)$. Here $\phi$ is the automorphism of $W(k)$ induced by the automorphism $x\to x^p$ on $k$. So if $k=\bF_p$, then $D_k$ is commutative.

By the standard procedure (see \cite{pink2004finite}) we can associate $A[p^\infty]$ with a $D_k$ module $M(A)$. It is a free $W(k)$ module of rank $2g$. Its $D_k$ action is uniquely determined by the action of $F$ (or $V$), then Tate proved
\begin{theorem}[Tate]{\label{Tatep}}
For two abelian varieties $A,B$ over $k$, under the above notation, we have a natural isomorphism
$$\bhom_{AV}(A,B)\otimes \bz_p\cong\bhom_{D_k}(M(B),M(A))$$
\end{theorem}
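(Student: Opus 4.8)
The plan is to prove Theorem~\ref{Tatep} by running Tate's proof of Theorem~\ref{Tateconj} with the $p$-divisible group $A[p^\infty]$ and its (contravariant) Dieudonné module $M(A)$ playing the roles of the Tate module $T_l(A)$ and its $\bz_l[G]$-action, following \cite{tate1966endomorphisms} and \cite{waterhouse1968abelian}. Over the perfect field $k$ the functor $M$ is an anti-equivalence from $p$-divisible groups over $k$ to $D_k$-modules free of finite rank over $W(k)$, compatibly with the anti-equivalence on finite commutative $p$-power group schemes, under which $M(H[p^n])=M(H)/p^nM(H)$. The map of the theorem is the composite
\[
\bhom_{AV}(A,B)\otimes\bz_p\longrightarrow \bhom(A[p^\infty],B[p^\infty])\xrightarrow{\ \sim\ }\bhom_{D_k}(M(B),M(A)),
\]
whose second arrow is this anti-equivalence, so everything reduces to showing the first arrow is an isomorphism. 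By the usual dévissage --- apply the statement to $A\times B$ and read off a matrix block, then use Poincaré reducibility and Schur's lemma --- it is enough to treat $A=B$ with $A$ simple, so that $\mathrm{End}^0_{AV}(A)=:\Delta$ is a division algebra.

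The passage between the integral and the rational statements is the easy part. Since $\bhom_{AV}(A,B)$ is finitely generated and free over $\bz$, $\bigcap_n p^n\bhom_{AV}(A,B)=0$, and an $f\colon A\to B$ kills $A[p^n]$ exactly when $f=p^ng$ (because $[p^n]\colon A\to A$ identifies $A/A[p^n]$ with $A$); hence $\bhom_{AV}(A,B)/p^n$ injects into $\bhom(A[p^n],B[p^n])$, and taking the inverse limit shows the first arrow is injective. The same divisibility observation --- $f\in\bhom_{AV}(A,B)$ with $f(A[p])=0$ forces $f\in p\,\bhom_{AV}(A,B)$ --- shows the cokernel of the first arrow is torsion-free. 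Combined with injectivity, it therefore suffices to prove the isomorphism after $\otimes_{\bz_p}\bq_p$: that $\Delta\otimes_\bq\bq_p\to\mathrm{End}_{D_k\otimes\bq_p}(N(A))$ is an isomorphism, where $N(A)=M(A)[1/p]$.

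This last statement is the crux, and here I would reproduce Tate's bicommutant argument. The inclusion of $\Delta\otimes\bq_p$ into the commutant $\mathrm{End}_{D_k\otimes\bq_p}(N(A))$ is what we already have; since $D_k\otimes\bq_p$ is a semisimple $\bq_p$-algebra, $N(A)$ is a semisimple $D_k\otimes\bq_p$-module and both algebras in sight are semisimple, so it remains to show that $\Delta\otimes\bq_p$ acts on $N(A)$ with the same isotypic decomposition --- equivalently, that every $D_k\otimes\bq_p$-stable subspace $W\subseteq N(A)$ is the image of an idempotent lying already in $\Delta\otimes\bq_p$. This is where Tate's finiteness lemma enters, namely the classical, moduli-theoretic finiteness of abelian varieties over $k$ carrying a polarization of bounded degree together with a $p$-power isogeny to $A$: from $W$ one manufactures Dieudonné lattices $M(A)\subseteq M_n\subseteq N(A)$, hence abelian varieties $A_n$ with $p$-power isogenies $A_n\to A$ and with polarizations of uniformly bounded degree transported from a fixed polarization of $A$; the finiteness lemma forces $A_m\cong A_n$ for some $m\ne n$, and the resulting self-quasi-isogeny of $A$ produces, after a limiting argument, an idempotent of $\Delta\otimes\bq_p$ with image $W$. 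This gives the equality of commutants and hence the theorem.

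The main obstacle is this crux step, and within it the careful use of the finiteness lemma: one must check that the Dieudonné lattices $M_n$ one writes down correspond to honest finite subgroup \emph{schemes} over $k$ (so that the $A_n$ genuinely are abelian varieties, not merely abstract modules), that the transported polarizations have degree bounded independently of $n$, and that the $\phi$-semilinearity of $F$ and the non-commutativity of $D_k$ (already present when $k\ne\bF_p$) are correctly accommodated in the linear-algebra bookkeeping. The reductions, the anti-equivalence of Dieudonné theory, the injectivity, and the torsion-freeness of the cokernel are by contrast routine.
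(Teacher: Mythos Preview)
The paper does not actually prove this theorem: immediately after stating it, the author writes ``This theorem can be found in \cite{waterhouse1968abelian}'' and moves on. Your proposal therefore goes well beyond the paper, supplying a genuine sketch of the argument rather than a citation. The route you outline --- reduce to $A=B$ simple, show injectivity and torsion-freeness of the cokernel integrally, then prove the rational statement by Tate's bicommutant/finiteness-lemma method with Dieudonn\'e lattices in place of $\bz_l$-lattices --- is exactly the standard approach and is indeed what one finds in Waterhouse--Milne, so in that sense your proof and the paper's (outsourced) proof coincide.

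One point to tighten: the assertion that ``$D_k\otimes\bq_p$ is a semisimple $\bq_p$-algebra'' is not correct as written, since $D_k$ is infinite-dimensional over $\bz_p$. What you need is that the category of $F$-isocrystals over $k$ (finitely generated $D_k[1/p]$-modules free over $W(k)[1/p]$) is semisimple, which is the Dieudonn\'e--Manin classification; equivalently, $N(A)$ is a semisimple module and its endomorphism algebra is semisimple. With that correction the bicommutant step goes through, and your closing paragraph already flags the genuine technical points (that the lattices $M_n$ correspond to actual subgroup schemes, bounded polarization degree, $\phi$-semilinearity) that need care in a full write-up.
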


This theorem can be found in \cite{waterhouse1968abelian}. In particular, we have 
$$\bhom_{AV}(A,A)\cong\bhom_{D_k}(M(A),M(A))$$
In this case, if we denote $\sigma_A$ is image of $\pi_A$ (The Frobenius of $A$) under this isomorphism, then $\sigma_A=F^m$ if $k=\bF_{p^m}$. And the character polynomial of $\sigma_A$ is just $P_{\pi_A}$.

\section{The Finiteness of Isogenous Classes}
 In this section we will prove that there are finitely many isogeny classes of abelian varieties over $k$ of dimension $g$. We first recall the isogeny thoerem.
 
 \begin{theorem}[Tate]{\label{tateiso}}
 Given two abelian varieties $A,B$ over $k$, then 
 $$A\ and\ B\ are\ isogenous$$ $$\iff P_{\pi_A}=P_{\pi_B}$$
 $$\iff T_l(A)\otimes\bq_l\cong T_l(B)\otimes\bq_l\ as\ \bq_l[G]\ modules$$
 \end{theorem}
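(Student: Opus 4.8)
The plan is to prove the three equivalences while isolating the single deep input, Tate's Theorem~\ref{Tateconj}, from the formal steps. Write $(1),(2),(3)$ for the three assertions in the order listed. Two implications are immediate. If $A\sim B$, an isogeny $\phi\colon A\to B$ is a morphism over $k$, hence $G$-equivariant, and it induces an isomorphism $V_l(\phi)\colon V_l(A)\to V_l(B)$ of $\bq_l[G]$-modules (an isogeny is invertible up to multiplication by an integer, so $V_l(\phi)$ has a one-sided inverse, and both sides have dimension $2g$); this is $(1)\Rightarrow(3)$. An isomorphism of $\bq_l[G]$-modules intertwines the two actions of $\sigma$, which therefore have equal characteristic polynomials, and, as recalled in Section~2, the characteristic polynomial of $\sigma$ on $V_l(A)$ is exactly $P_{\pi_A}$; this is $(3)\Rightarrow(2)$. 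For the return arrow $(2)\Rightarrow(3)$ I would first note that $\sigma$ acts semisimply on $V_l(A)$: for $A$ simple this is because $\operatorname{End}^0(A)$ (that is, $\bhom_{AV}(A,A)\otimes\bq$) is a division algebra by Poincar\'e complete reducibility and Schur's lemma, so the commutative subring $\bq[\pi_A]$ is a field, the minimal polynomial of $\pi_A$ over $\bq$ is irreducible hence separable, and since $\sigma$ is a zero of this polynomial its minimal polynomial over $\bq_l$ is squarefree; the general case follows from the isotypic decomposition of $A$ together with $(1)\Rightarrow(3)$. Given semisimplicity, $V_l(A)$ and $V_l(B)$ are semisimple $\bq_l[\sigma]$-modules, and a finite-dimensional semisimple $\bq_l[x]$-module is $\bigoplus_i(\bq_l[x]/(p_i))^{a_i}$ with the distinct monic irreducibles $p_i$ and their multiplicities $a_i$ recoverable from the characteristic polynomial $\prod_i p_i^{a_i}$; hence $P_{\pi_A}=P_{\pi_B}$ forces $V_l(A)\cong V_l(B)$ as $\bq_l[\sigma]$-modules, and therefore---since $\sigma$ topologically generates $G$, so every $\sigma$-equivariant map is $G$-equivariant---as $\bq_l[G]$-modules.

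The substantive direction is $(3)\Rightarrow(1)$, which I would deduce from Theorem~\ref{Tateconj} by an $l$-adic density argument. Tensoring that theorem with $\bq_l$ gives an isomorphism of $\bq_l$-vector spaces $\bhom_{AV}(A,B)\otimes\bq_l\cong\bhom_{\bq_l[G]}(V_l(A),V_l(B))$ under which an element of the left-hand side goes to the linear map it induces on Tate modules. Since $\bhom_{AV}(A,B)$ is a free $\bz$-module of finite rank $r$, the subgroup $\bhom_{AV}(A,B)\otimes\bq$ is $l$-adically dense in $\bhom_{AV}(A,B)\otimes\bq_l\cong\bq_l^{\,r}$. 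Inside the latter, the set of $f$ with $V_l(f)$ an isomorphism is $l$-adically open---its complement is the zero locus of the continuous function $f\mapsto\det V_l(f)$ computed in fixed bases---and it is nonempty, precisely by hypothesis $(3)$. Hence it meets the dense subspace: there is $f\in\bhom_{AV}(A,B)\otimes\bq$ with $V_l(f)$ an isomorphism. Choosing $n\in\bz_{>0}$ with $nf\in\bhom_{AV}(A,B)$, the map $nf$ is an honest homomorphism and $V_l(nf)=n\,V_l(f)$ is again an isomorphism; and a homomorphism between abelian varieties of equal dimension is an isogeny exactly when it induces an isomorphism on $V_l$ (a non-isogeny has image of strictly smaller dimension, so its induced map on $V_l$ is not surjective). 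Therefore $nf$ is an isogeny and $A\sim B$, which together with $(1)\Rightarrow(3)$, $(3)\Rightarrow(2)$, $(2)\Rightarrow(3)$ closes all the equivalences.

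Since Theorem~\ref{Tateconj} is granted, none of the steps above is a genuine obstacle; the point that needs the most care is the semisimplicity of Frobenius feeding $(2)\Rightarrow(3)$, after which the only inputs are standard structural facts---that $\bhom_{AV}(A,B)$ is free of finite rank, that isogenies among $g$-dimensional abelian varieties are exactly the homomorphisms inducing isomorphisms on $V_l$, and the $l$-adic density of $\bq^{\,r}$ in $\bq_l^{\,r}$---so that $(3)\Rightarrow(1)$ is ultimately a soft topological argument resting entirely on Theorem~\ref{Tateconj}. As a remark, the equivalence is insensitive to the choice of auxiliary prime $l\neq p$, since Section~2 records that the characteristic polynomial of $\sigma$ on $V_l(A)$ does not depend on $l$.
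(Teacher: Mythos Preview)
Your argument is correct, but note that the paper does not actually prove Theorem~\ref{tateiso}: it is stated as a result of Tate and used as a black box in the proof of Corollary~\ref{iso}, so there is no proof in the paper to compare against. Your treatment is nonetheless the standard one, essentially Tate's own: the implications $(1)\Rightarrow(3)\Rightarrow(2)$ are formal; $(2)\Rightarrow(3)$ rests on the semisimplicity of Frobenius, which you correctly extract from the fact that $\bq[\pi_A]\subset\bhom_{AV}(A,A)\otimes\bq$ is a product of number fields via Poincar\'e complete reducibility; and the substantive step $(3)\Rightarrow(1)$ is the $l$-adic density argument built on Theorem~\ref{Tateconj}. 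All the steps are sound.
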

 So to consider the isogenous classes we just need to consider the characteristic polynomials of the Frobenius. But we have the following big theorem.
 \begin{theorem}[Weil Conjecture]{\label{Weil}}
 Let $A$ be an abelian variety over $k$ with dimension $g$, then $P_{\pi_A}$ is a monic polynomial with coefficients in $\bz$ with degree $2g$, and if $\alpha$ is a root of $P_{\pi_A}$, then for any Galois embedding $\eta:\bar{\bq}\to\bar{\bq}$ over $\bq$, we have $|\eta(\alpha)|=\sqrt{q}$, here $q$ is the number of elements in $k$.
 \end{theorem}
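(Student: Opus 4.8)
Since the statement that $P_{\pi_A}$ is monic of degree $2g$ with coefficients in $\bz$ has essentially been recalled already in Section~2, I regard the real content as the estimate $|\eta(\alpha)|=\sqrt q$, and my plan is the classical polarization argument. (For completeness: $n\mapsto\deg(n-\pi_A)$ is a polynomial of degree $2g$ in $n$ because $\deg$ is a homogeneous polynomial function of degree $2g=\dim_{\bq_l}V_l(A)$ on $\bhom_{AV}(A,A)\otimes\bq$, it is monic because the top-degree term is $\deg([n])=n^{2g}$, and it coincides with the characteristic polynomial of $\sigma$ on the free $\bz_l$-module $T_l(A)$ for every $l\neq p$, hence is a rational polynomial whose coefficients lie in $\bz_l$ for all $l\neq p$ and therefore in $\bz$.)

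First I would fix an ample line bundle $L$ on $A$ defined over $k$ and let $\lambda=\phi_L:A\to\wa$ be the associated polarization, which is an isogeny; this produces the Rosati involution $\phi\mapsto\phi^{\dagger}=\lambda^{-1}\circ\widehat{\phi}\circ\lambda$ on $\mathrm{End}^0(A):=\bhom_{AV}(A,A)\otimes\bq$. The key identity to establish is
\[ \pi_A^{\dagger}\circ\pi_A=[q]\qquad\text{in }\mathrm{End}^0(A). \]
For this I would combine the standard formula $\phi_{f^{*}M}=\widehat{f}\circ\phi_M\circ f$ with the fact that $\pi_A^{*}L\cong L^{\otimes q}$: since $\pi_A$ is the $q$-power Frobenius morphism and $L$ is defined over $k$, a transition cocycle of $L$ with coefficients in $k$ pulls back under $\pi_A$ to its $q$-th power. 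Thus $\widehat{\pi_A}\circ\phi_L\circ\pi_A=\phi_{\pi_A^{*}L}=\phi_{L^{\otimes q}}=q\,\phi_L=[q]\circ\lambda$, and multiplying on the left by $\lambda^{-1}$ (and using that $[q]$ is central) gives the identity. In particular $\pi_A^{\dagger}=[q]\circ\pi_A^{-1}$, so $\pi_A$ commutes with $\pi_A^{\dagger}$ and the subalgebra $K:=\bq[\pi_A]\subseteq\mathrm{End}^0(A)$ is stable under $\dagger$.

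The decisive step is then to invoke the positivity of the Rosati involution, i.e. that $(\phi,\psi)\mapsto\mathrm{Tr}(\phi\circ\psi^{\dagger})$ is a positive definite $\bq$-bilinear form on $\mathrm{End}^0(A)$. Restricting $\dagger$ to $K$ yields a positive involution on the commutative, semisimple $\bq$-algebra $K$ (its semisimplicity is the semisimplicity of the Frobenius endomorphism, part of the same classical package), and the structure theory of such algebras forces $K=\prod_i K_i$ with each $K_i$ either a totally real field on which $\dagger$ is the identity, or a CM field on which $\dagger$ is complex conjugation. In either case the identity $\pi_A^{\dagger}\pi_A=[q]$ reads $\overline{\pi_{A,i}}\,\pi_{A,i}=q$ in $K_i$, so $|\eta(\pi_{A,i})|^{2}=q$ for every embedding $\eta:K_i\hookrightarrow\mathbb{C}$. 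Since the roots of $P_{\pi_A}$ are exactly the eigenvalues of $\pi_A$ on $V_l(A)$, namely the numbers $\eta(\pi_{A,i})$, we obtain $|\eta(\alpha)|=\sqrt q$; and because $P_{\pi_A}\in\bq[T]$ its roots are permuted by $\mathrm{Gal}(\bar\bq/\bq)$, so the bound holds for every Galois embedding $\eta$ and every root $\alpha$, as required.

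The main obstacle is the positivity of the Rosati involution (and, bundled with it, the semisimplicity of the Frobenius): these are the deep classical theorems of Weil and Mumford, resting on the Riemann--Roch theorem for abelian varieties and a Hodge-index-type positivity for ample classes, and in the present paper I would cite them rather than reprove them. Once they are granted, together with the essentially elementary verification of $\pi_A^{\dagger}\circ\pi_A=[q]$, the remainder is the short piece of algebra sketched above.
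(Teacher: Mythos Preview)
Your sketch is the classical Weil argument and is correct in outline: the identity $\pi_A^{\dagger}\pi_A=[q]$ from $\pi_A^{*}L\cong L^{\otimes q}$, combined with the positivity of the Rosati involution and the semisimplicity of Frobenius, forces every archimedean absolute value of every root of $P_{\pi_A}$ to equal $\sqrt{q}$. The only caveat is that you (correctly) flag positivity and semisimplicity as deep inputs to be cited rather than proved; with those granted, the rest is sound.

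There is, however, nothing to compare against: the paper does not prove Theorem~\ref{Weil} at all. It is stated as a known ``big theorem'' and immediately used to bound the coefficients of $P_{\pi_A}$ in Corollary~\ref{iso}. So your proposal supplies strictly more than the paper does for this statement; the paper's own treatment is a bare citation of the result.
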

 With these two theorems, we can state
 \begin{cor}{\label{iso}}
 There are only finitely many isogenous classes of abelian varieties over $k$ of dimension $g$.
 \end{cor}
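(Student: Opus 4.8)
The plan is to reduce the statement about isogeny classes to a statement about characteristic polynomials and then to bound those polynomials. By Theorem \ref{tateiso}, two abelian varieties over $k$ are isogenous if and only if $P_{\pi_A}=P_{\pi_B}$. Hence the assignment sending an isogeny class to the common polynomial $P_{\pi_A}$ of its members is a well-defined \emph{injection} from the set of isogeny classes of $g$-dimensional abelian varieties over $k$ into $\bz[t]$. So it suffices to prove that its image is finite, i.e. that only finitely many polynomials arise as $P_{\pi_A}$ for some $A$ over $k$ with $\dd A=g$.

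Next I would invoke Theorem \ref{Weil}: each such $P_{\pi_A}$ is monic of degree exactly $2g$ with integer coefficients, and every root $\alpha$ satisfies $|\eta(\alpha)|=\sqrt{q}$ for every embedding $\eta$ over $\bq$. Write $P_{\pi_A}(t)=t^{2g}+c_1t^{2g-1}+\dots+c_{2g}$ with all $c_i\in\bz$, and let $\alpha_1,\dots,\alpha_{2g}$ be the roots (in a fixed algebraic closure of $\bq$). Up to sign, $c_i$ is the $i$-th elementary symmetric function $e_i(\alpha_1,\dots,\alpha_{2g})$, so by the triangle inequality $|c_i|\le \binom{2g}{i}\,q^{i/2}\le \binom{2g}{i}\,q^{g}$, a bound depending only on $g$ and $q$.

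Finally, a monic polynomial of degree $2g$ with integer coefficients, each bounded in absolute value by the explicit constant above, can take only finitely many values: at most $\prod_{i=1}^{2g}\bigl(2\lfloor \binom{2g}{i}\,q^{g}\rfloor+1\bigr)$ of them. Combining this with the injection from the first step shows that the set of isogeny classes of abelian varieties over $k$ of dimension $g$ is finite, which is exactly Corollary \ref{iso}.

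I do not expect a genuine obstacle in this argument; it is essentially a counting estimate built on the two black-box theorems already quoted. The only subtlety worth a remark is that the finite set of polynomials bounded above is a priori strictly larger than the set actually realized by abelian varieties — the precise image being governed by the Honda--Tate theorem — but since Corollary \ref{iso} only asks for finiteness, the crude bound suffices and no sharper input is needed.
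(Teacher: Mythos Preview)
Your argument is correct and follows essentially the same route as the paper: reduce via Theorem~\ref{tateiso} to the finiteness of possible characteristic polynomials, then use Theorem~\ref{Weil} to bound the coefficients as elementary symmetric functions of roots of absolute value $\sqrt{q}$, and conclude by the integrality of the coefficients. Your version is in fact slightly sharper, giving the explicit bound $|c_i|\le\binom{2g}{i}q^{i/2}$ in place of the paper's unspecified constant $M$, and your closing remark about Honda--Tate is a nice touch but, as you note, not needed for the corollary.
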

 \begin{proof}
 By theorem \ref{tateiso}, it suffices to prove there are only finitely many characteristic polynomials. Suppose $k=\bF_{q}$. If $$P(x)=\Sigma_{i=0}^{2g}a_ix^{2g-i}$$ is the characteristic polynomial of some abelian variety, and $\alpha_1,\alpha_2,...,\alpha_{2g}$ are roots of $P(x)$, then by theorem \ref{Weil}, $|\alpha_i|\leq\sqrt{q}$, so we have $$|a_s|=|\Sigma_{1\leq i_1<i_2<...<i_s\leq 2g}\alpha_{i_1}...\alpha_{i_s}|$$
 $$\leq\Sigma_{1\leq i_1<i_2<...<i_s\leq 2g}|\alpha_{i_1}...\alpha_{i_s}|$$
 $$\leq\Sigma_{1\leq i_1<i_2<...<i_s\leq 2g}\sqrt{q}^s$$
 $$\leq M\sqrt{q}^s$$
 for some $M$. So we know all $a_i$ are bounded by some number which only depends on the field $k$. But we know all $a_i$ are integers, so we only have finitely many choices, so there are only finitely many polynomials can be the characteristic polynomial of some abelian variety. So there are only finitely many isogenous classes.
 \end{proof}
 So to prove there are finitely many isomorphism classes it suffices to show every isogenous class of abelian varieties only contains finitely many isomorphism classes.
 
 \section{Some Calculus of the Tate Module}
 In this section we fix an abelian variety $A$ over $k$ with dimension $g$. $\pi_A$ will denote the Frobenius morphism of $A$, $P_{\pi_A}$ is its characteristic polynomial. Let $\mc$ be the isogenous class containing $A$. We will also use $\pi_A$ to mean the element in $\bhom_{\bz_l[G]}(T_l(A),T_l(A))$ under the Tate's isomorphism, which can be regarded as a $2g\times 2g$ matrix with element in $\bz_l$. 
 
The main proposition of this section is:
\begin{prop}{\label{mainprop}}
With the above data, there exists a positive integer $N$ which only depends on $A$ (we will see from the proof $N$ only depends on $\mc$), such that for any $B\in\mc$ and $l>N$, $T_l(B)\cong T_l(A)$ as $\bz_l[G]$ modules, and for $l<N$, the set $\{T_l(B)|B\in\mc\}$ (consider as $\bz_l[G]$ modules) is a finite set (we include the case $l=chark$, in which case we consider Dieudonne modules as in section 2).
\end{prop}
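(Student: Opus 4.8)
The plan is to analyze how the Tate modules $T_l(B)$ for $B$ ranging over the isogeny class $\mc$ can differ from $T_l(A)$, and to show that they can differ only for primes $l$ below a bound depending on $\mc$. The starting observation is Theorem \ref{tateiso}: every $B \in \mc$ satisfies $P_{\pi_B} = P_{\pi_A}$, so $V_l(B) \cong V_l(A)$ as $\bq_l[G]$-modules for every $l \neq p$. Thus $T_l(B)$ and $T_l(A)$ are both $\bz_l[G]$-lattices (i.e. $G$-stable $\bz_l$-lattices) inside one and the same $\bq_l[G]$-module $V := V_l(A)$, where $G$ acts through $\sigma = \pi_A$. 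So the proposition reduces to a statement about $\pi_A$-stable lattices in $V$: for all but finitely many $l$, every such lattice is isomorphic as a $\bz_l[\pi_A]$-module to the standard one, and for the finitely many exceptional $l$ there are only finitely many isomorphism classes.

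**The key steps**, in order. First, I would set up the lattice-theoretic framework: given an isogeny $f\colon B \to A$, the induced map $T_l(f)\colon T_l(B) \to T_l(A)$ is injective with finite cokernel annihilated by $\deg f$, so for $l \nmid \deg f$ it is an isomorphism; thus the "bad" primes for a single $B$ are the prime divisors of the degree of a chosen isogeny to $A$. The work is to bound these uniformly over all $B \in \mc$. Second, I would reduce to studying the order $R := \bz[\pi_A] \subseteq \mathrm{End}(A) =: \Lambda$, and more precisely the $\bz_l$-algebras $R_l := R \otimes \bz_l$ acting on $T_l(B)$. The ring $R$ is $\bz[x]/(P_{\pi_A}(x))$ modulo the relations satisfied by $\pi_A$; its conductor inside the maximal order of $R \otimes \bq$, together with the discriminant of $P_{\pi_A}$, picks out a finite set of primes $S$. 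Third — the main content — I would argue that for $l \notin S$ the algebra $R_l$ is a product of maximal orders (unramified, étale over $\bz_l$), hence a finite product of DVRs (and unramified group-ring factors), over which every finitely generated torsion-free module is free of determined rank; since $V$ as an $R\otimes\bq$-module has a fixed isomorphism type (determined by $P_{\pi_A}$), every $R_l$-lattice in $V$ is isomorphic to $T_l(A)$, giving $T_l(B) \cong T_l(A)$ as $\bz_l[G]$-modules. Fourth, for $l \in S$ (including $l = p$, where one runs the same argument with the Dieudonné module $M(A)$ over $D_k$ in place of $T_l(A)$, using Theorem \ref{Tatep} and the fact that $M(B) \otimes \bq$ is pinned down by $P_{\pi_A}$): here $R_l$ need not be maximal, but the lattices $T_l(B)$ all lie inside $V$ and all contain a common sublattice of bounded index — namely, one can take the $R_l$-lattice generated by $T_l(A)$ together with the requirement that it be stable, and the index is bounded because $R_l$ itself is a finite module over $\bz_l$ with bounded "depth" (the conductor exponent). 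The finitely many lattices between two fixed commensurable lattices, up to isomorphism, form a finite set. Fifth, I would take $N$ to be larger than every prime in $S$; then $l > N$ gives the isomorphism assertion and $l \le N$ gives the finiteness assertion, with $N$ manifestly depending only on $P_{\pi_A}$, hence only on $\mc$.

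**The main obstacle** I anticipate is the step controlling the bad primes $l \in S$ and the case $l = p$ simultaneously: showing that, although $R_l$ may be a non-maximal order and the category of $R_l$-lattices in $V$ can be genuinely complicated, the sublattices arising as actual Tate modules $T_l(B)$ are confined to a bounded "ball" around $T_l(A)$. One clean way to see this: every $B \in \mc$ admits an isogeny $A \to B$ whose kernel is a subgroup of $A[m]$ for some $m$, but $m$ a priori is unbounded over $\mc$; the fix is to choose the isogeny optimally (factor out the connected-étale pieces and use that $T_l(B)$ is squeezed between $T_l(A)$ and $\ell^{-c} T_l(A)$ where $c$ is the $l$-adic valuation of the conductor of $R$, a quantity intrinsic to $\mc$). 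Making this squeezing precise — and handling the Dieudonné-module analogue for $l = p$, where "lattice" means $D_k$-stable $W(k)$-lattice in $M(A)\otimes\bq$ — is where the care is needed; once it is in place, the finiteness of lattices of bounded index between commensurable lattices is standard.
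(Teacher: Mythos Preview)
Your approach is correct and arrives at the same conclusion as the paper, but by a more conceptual route. The paper proves the two halves by explicit linear algebra: for $l$ not dividing the discriminant of $P_{\pi_A}$ (and certain Bezout denominators) it constructs by hand a ``cyclic'' basis $\{v_i,\pi_A v_i,\ldots,\pi_A^{t-1}v_i\}$ of $T_l(A)$ in which the matrix of $\pi_A$ depends only on $P_{\pi_A}$; for the remaining $l$ it sandwiches $T_l(A)$ between a standard lattice $\oplus N_i$ and $l^{-s_1-s_2}\oplus N_i$, with the exponents $s_1,s_2$ depending only on $P_{\pi_A}$ and $l$. You replace this with the structure theory of orders: for $l$ outside the discriminant, $R_l=\bz_l[\pi_A]$ is \'etale over $\bz_l$, hence a product of DVRs, so every $R_l$-lattice in $V$ is free of the rank forced by $V$; for bad $l$, the conductor of $R_l$ in a maximal order yields the same sandwiching (the local Jordan--Zassenhaus argument). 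These are the same mathematics in different dress: the paper's basis construction is precisely a bare-hands proof that $R_l$-lattices are free when $R_l$ is \'etale, and its exponents $s_1,s_2$ are your conductor exponents. Your packaging is cleaner if one is willing to quote order theory; the paper's is more self-contained.

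One point in your Step~4 should be tightened. You oscillate between bounding only those lattices that arise as actual $T_l(B)$ (via ``choosing the isogeny optimally'') and bounding \emph{all} $R_l$-lattices in $V$ up to isomorphism. The second is what the paper does and what your conductor argument actually delivers; the isogeny-optimization detour is unnecessary and risks circularity. Also, the sentence ``$T_l(B)$ is squeezed between $T_l(A)$ and $\ell^{-c}T_l(A)$'' is not literally correct: what is true is that for a maximal order $\Lambda\supseteq R_l$ with conductor $\mathfrak f$, every $R_l$-lattice $L$ satisfies $\mathfrak f\cdot\Lambda L\subseteq L\subseteq\Lambda L$, and since $\Lambda$-lattices in $V$ over a local field are unique up to isomorphism, $L$ is isomorphic to one of the finitely many $R_l$-submodules between a fixed $M_0$ and $\mathfrak f M_0$. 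For $l=p$ the paper runs the same sandwich over $W(k)$ for the action of $\pi_A=F^m$, then observes that a fixed $W(k)[\pi_A]$-structure admits only finitely many semisimple $\sigma$-semilinear $F$ with $F^m=\pi_A$; your sketch for this case is compatible but, like the paper's, would benefit from more detail.
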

Before the proof, we first notice that we must have $T_l(A)\otimes\bq_l\cong T_l(B)\otimes\bq_l$ as $\bq_l[G]$ module. As we discussed, the $G$ action action on the Tate module is uniquely determined by the action of the Frobenius. So we can see $T_l(A)\cong T_l(B)$ as $\bz_l[G]$ modules if and only if $\pi_A$ and $\pi_B$ are conjugate by some matrix in $GL_{2g}(\bz_l)$ (not $GL_{2g}(\bq_l)$, they already conjugate by some matrix in $GL_{2g}(\bq_l)$ by Tate's isogeny theorem).

We separated the proof into two parts, consists of the following two lemmas. They are all purely linear algebra things.
\begin{lem}{\label{1}}
There exists a posotive $N$ such that for any abelian variety $B$ which is isogenous to $A$, and $l>N$, we can find basis of $T_l(B)$ and $T_l(A)$ such that the matrices of $\pi_A$ and $\pi_B$ will be the same.
\end{lem}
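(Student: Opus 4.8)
The plan is to reduce the statement to a uniform bound on the index of $\mathbb{Z}_l[\pi_A]$-lattices inside $V_l(A) = T_l(A) \otimes \mathbb{Q}_l$. By Tate's isogeny theorem, for any $B \in \mathcal{C}$ we have $V_l(A) \cong V_l(B)$ as $\mathbb{Q}_l[G]$-modules, and since the $G$-action is determined by the Frobenius, this means $\pi_A$ and $\pi_B$ become conjugate over $GL_{2g}(\mathbb{Q}_l)$. So I may regard $T_l(A)$ and $T_l(B)$ as two $\mathbb{Z}_l$-lattices in a single $\mathbb{Q}_l$-vector space $V$ of dimension $2g$, each stable under a single operator $\pi$ (the common image of Frobenius) whose characteristic polynomial is $P_{\pi_A}(x)$, a fixed polynomial in $\mathbb{Z}[x]$ depending only on $\mathcal{C}$. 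The claim \emph{"there are bases in which the matrices of $\pi_A$ and $\pi_B$ coincide"} is exactly the claim that $T_l(A)$ and $T_l(B)$ are isomorphic as modules over the ring $R_l := \mathbb{Z}_l[\pi] \cong \mathbb{Z}_l[x]/(P_{\pi_A}(x))$ — equivalently as $\mathbb{Z}_l[G]$-modules, as noted in the remark preceding the lemma.

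The key step is to control, for $l$ large, the structure of $R_l$. Let $R := \mathbb{Z}[x]/(P_{\pi_A}(x))$, a one-dimensional order whose normalization $\widetilde{R}$ in the étale $\mathbb{Q}$-algebra $\mathbb{Q}[x]/(P_{\pi_A})$ is a finite product of rings of integers; the conductor $\mathfrak{f} = \operatorname{Ann}_R(\widetilde{R}/R)$ is a nonzero ideal, and $P_{\pi_A}$ has a nonzero discriminant (its roots are Weil numbers of absolute value $\sqrt q$, hence distinct — here one needs that $P_{\pi_A}$ is separable, which follows since the $\alpha_i$ and $q/\alpha_i$ pair up without coincidence, or more safely one passes to the semisimple part; I will use that $\pi_A$ acts semisimply on $V_l$, a standard consequence of the Weil bounds). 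Choose $N$ so that every prime $l > N$ divides neither the discriminant of $P_{\pi_A}$ nor the index $[\widetilde{R}:R]$ nor any relevant torsion. Then for $l > N$, $R_l = \widetilde{R} \otimes \mathbb{Z}_l = R \otimes \mathbb{Z}_l$ is a product of unramified (indeed maximal) $\mathbb{Z}_l$-orders, so it is a product of discrete valuation rings — in particular a principal ideal domain in each factor, hence a maximal order.

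Once $R_l$ is a product of DVRs, any finitely generated torsion-free $R_l$-module that spans $V$ over $\mathbb{Q}_l$ — and both $T_l(A)$ and $T_l(B)$ are such, being $\mathbb{Z}_l$-free of rank $2g$ and $\pi$-stable — is automatically \emph{free} of the same rank over each factor, by the structure theorem over a PID applied factor-by-factor. (Here I use that $V$ as an $R_l \otimes \mathbb{Q}_l$-module is free of rank one on each factor because $P_{\pi_A}$ is squarefree, so the rank of $T_l(A)$ over each factor of $R_l$ is forced to be $1$.) Therefore $T_l(A) \cong R_l \cong T_l(B)$ as $R_l$-modules, which gives the common-basis statement. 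The main obstacle is the separability/semisimplicity input: I must ensure $P_{\pi_A}$ is squarefree (equivalently, $\pi_A$ semisimple on $V_l$) so that $R_l \otimes \mathbb{Q}_l$ is étale and $V$ is free of rank one over it; if $P_{\pi_A}$ has repeated factors one replaces $R$ by $\mathbb{Z}[\pi_A]$ acting on $V$ and argues with the semisimplification, or invokes the known fact (Tate, Weil) that the Frobenius acts semisimply, and then the enlargement of $N$ to also clear the relevant conductors is routine.
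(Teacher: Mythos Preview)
Your approach is sound and more conceptual than the paper's, but one point needs correction. The claim that Weil roots of absolute value $\sqrt{q}$ are automatically distinct is false---take $A=E\times E$ for an elliptic curve $E$, so $P_{\pi_A}=P_{\pi_E}^{2}$---and the parenthetical ``$P_{\pi_A}$ squarefree (equivalently, $\pi_A$ semisimple)'' confuses the characteristic polynomial with the minimal one. The fix you gesture at in your last sentence is the correct argument and should be the main line: take $R_l=\bz_l[\pi_A]\subset\mathrm{End}_{\bz_l}(T_l(A))$, which is $\bz_l[x]/(m(x))$ for $m$ the \emph{minimal} polynomial; semisimplicity of Frobenius (Tate) makes $m$ squarefree, so for $l\nmid\mathrm{disc}(m)$ the ring $R_l$ is a finite product of DVRs, and any $\pi$-stable $\bz_l$-lattice in $V_l$ is free over each factor with ranks read off from $V_l(A)\cong V_l(B)$, whence $T_l(A)\cong T_l(B)$ as $R_l$-modules.

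The paper takes a more hands-on route. It first uses Bezout over $\bq$ to split $T_l(A)$ along the distinct irreducible factors of $P_{\pi_A}$ (costing a denominator bound $N_0$); then on each piece, where the characteristic polynomial is $Q^e$ with $Q$ irreducible of degree $t$, it passes to $\bar{\bz}_l$, produces eigenvectors via a second Bezout step, and from them manufactures an explicit $\bz_l$-basis of the shape $\{v_j,\pi_A v_j,\ldots,\pi_A^{t-1}v_j\}_{1\le j\le e}$. The integrality of the change of basis is established by inverting a Vandermonde, which forces $l>|D|$ for $D$ the root discriminant; in this basis the matrix of $\pi$ is a fixed block matrix depending only on $P_{\pi_A}$. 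Your module-theoretic argument bypasses the explicit basis and the Vandermonde bookkeeping; the paper's construction is elementary but in fact also leans on semisimplicity (when it asserts that $P_i(\pi_A)T_l(A)$ lies in the $\alpha_i$-eigenspace one needs $Q(\pi_A)=0$, not merely $Q(\pi_A)^e=0$).
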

\begin{proof}
We know $A$ is isogenous to $A_1^{b_1}\times A_2^{b_2}\times...\times A_s^{b_s}$ where all $A_i$ are simple, non-isogenous with each other. We know for a simple abelian variety $A_i$, the characteristic polynomial $P_{\pi_{A_i}}$ of the Frobenius is a power of an irreducible polynomial, so $P_{\pi_{A_i^{b_i}}}$ is also a power of an irreducible polynomial. The characteristic polynomials $P_{\pi_{A_i^{b_i}}}$ should coprime with each other, let $K_i=\Pi_{j\neq i}P_{\pi_{A_j^{b_j}}}$ then by Bezout theorem, there exists $g_i\in\bq[x]$ such that

$$\Sigma_{i=1}^s g_i(x)K_i(x)=1$$

Then choose $N_0$ be a positive number such that if $l>N_0$, then all $g_i(x)\in\bz_l[x]$ (i.e. $l$ doesn't divide any denominators in $g_i$). And denote $M_i=K_i(\pi)T_l(A)$, then since all $g_i\in\bz_l[x]$, so if $l>N_0$, $T_l(A)=\oplus M_i$. And this $N_0$ only depends on the chosen isogenous class $\mc$, and on each $M_i$, the characteristic polynomial of $\pi_A$ is $P_{\pi_{A_i^{b_i}}}$, which is a power of an irreducible polynomial. Then we will concentrate on one $M_1$, i.e. we just assume $M_1=T_l(A)$, and we can see the similar procedure can be applied to all $2\leq i\leq s$ and prove the lemma in the general case. 

We know $\pi_A$ is an invertible matrix with coefficients in $\bz_l$. Let $\{\alpha_1,...,\alpha_t\}$ be the roots of $P_{\pi_A}$, then we have $P_{\pi_A}=((x-\alpha_1)...(x-\alpha_t))^e$ for some $e$ and $Q(x)=\Pi_{i=1}^t(x-\alpha_i)\in\bz[x]$ is irreducible. Then we define
$$P_i(x)=\Pi_{j\neq i}(x-\alpha_j)$$
for $1\leq i\leq k$. Since they don't have common factors, so we may choose $h_i(x)\in\bar{\bq}[x]$ such that $$\Sigma h_iP_i=1$$ Choose $N_1$ such that if $l>N_1$, then we have $h_i\in\bar{\bz}_l[x]$. Then we can see for any $v\in T_l(A)$, $v=\Sigma h_i(\pi_A)P_i(\pi_A)v$. Also it is easy to check $$L_i=P_i(\pi_A)T_l(A)$$
lies in $\alpha_i$ eigenspace (consider this over $\bar{\bz_l}$). Let $\bar{L_i}$ be the $\bar{\bz_l}$ linear expansion of $L_i$ in $T_l(A)\otimes\bar{\bz_l}$. Since every eigenspace of different eigenvalues are linearly independent, so we have $$T_l(A)\otimes\bar{\bz_l}=\oplus \bar{L_i}$$ 

Define $D=\Pi_{i\neq j}(\alpha_i-\alpha_j)^2$, then $D\in\bz$. We pick $u_1,...,u_e$ to be an integral basis of $\bar{L_1}$ over $\bar{\bz_l}$, such that $u_1,...,u_e$ can be represented by $u_i=P_1(\pi_A)(w_i)$ for  $w_1,...,w_e$ in $T_l(A)$ (This is true by linear algebra and the definition of $\bar{L_1}$). Define $v_i=\Sigma P_j(\pi_A)(w_i)$. Then we have $v_i\in T_l(A)$. We prove if $l>max(N_1,|D|)$, then $$\{v_1,\pi_A(v_1),...,\pi_A^{t-1}(v_1),v_2,...,\pi_A^{t-1}(v_2),...,v_e,...,\pi_A^{t-1}(v_e)\}$$ is an integral basis of $T_l(A)$. Since $v=\Sigma h_i(\pi_A)P_i(\pi_A)v$, so it suffices to show $P_i(\pi_A)(v)$ can be represented over $\bar{\bz_l}$ by these elements. From the Galois theory $P_i(x)=\phi(P_1(x))$ for some $\phi\in Gal(\bar{\bq_l}/\bq_l)$. By definition of $w_i$, $$P_1(\pi_A)(v)=\Sigma \beta_jP_1(\pi_A)(w_j)$$ for some $\beta_j\in\bar{\bz_l}$, so we have $$P_i(\pi_A)(v)=\Sigma \phi(\beta_j)P_i(\pi_A)(w_j)$$
Then we just need to show $P_i(\pi_A)(w_j)$ can be represented over $\bar{\bz_l}$ by these elements. This is solved by considering the system linear equations:
$$v_1=\Sigma P_i(\pi_A)(w_1)$$
$$\pi_A(v_1)=\Sigma \alpha_iP_i(\pi_A)(w_1)$$
$$....$$
$$\pi_A^{t-1}(v_1)=\Sigma \alpha_i^{t-1}P_i(\pi_A)(w_1)$$
then the matrix of this system of linear equations has determinant $D$, so by definition of $l$ and the Crammer's rule, $P_i(\pi_A)(w_j)$ can be represented over $\bar{\bz_l}$ by these elements. So $$\{v_1,\pi_A(v_1),...,\pi_A^{t-1}(v_1),v_2,...,\pi_A^{t-1}(v_2),...,v_e,...,\pi_A^{t-1}(v_e)\}$$ is an integral basis of $T_l(A)$ (Here we only proved every element can be represented integrally by these elements. But we have $te$ elements here and $te=2g=\dd T_l(A)\otimes\bq_l$, so they must form a basis). And the matrix of $\pi_A$ under this basis is uniquely determined by $P_{\pi_A}$, just denote this matrix by $C$ (independent of $A$). So if we set $N=max(N_1,|D|)$, then for $l>N_1$, we can choose a basis as above such that the Frobenius acts on $T_l(A)$ is represented by the matrix $C$. But this is independent of $A$, so we can do the same thing for $B$, so the matrices of $\pi_A$ and $\pi_B$ are the same. For the general case, we can find $N_i$ for each $M_i$, they are all only depend on $P_{\pi_A}$, so just choose $N=max(N_0,N_1,...,N_s)$, then from the above procedure, we can choose basis such that $\pi_A$ and $\pi_B$ have the same matrix when $l>N$. We proved the lemma.

\end{proof}
\begin{lem}{\label{2}}
With $N$ defined as above, for $l<N$, the set $\{T_l(B)|B\in\mc\}$ (consider as $\bz_l[G]$ modules) is a finite set (we include the case $l=chark$, in which case we consider Dieudonne modules as in section 2).
\end{lem}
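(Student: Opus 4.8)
The plan is to fix a prime $l<N$ — there are only finitely many, and the case $l=\mathrm{char}\,k=p$ will be handled by the identical argument with Dieudonn\'e modules in place of Tate modules, using Theorem \ref{Tatep} where one uses Theorem \ref{Tateconj} below — and to show that for this $l$ the modules $T_l(B)$, $B\in\mc$, fall into finitely many isomorphism classes. As observed just before Proposition \ref{mainprop}, after fixing an isogeny $\phi\colon A\to B$ and using $V_l(\phi)$ to identify $V_l(B)$ with $V:=V_l(A)$, the module $T_l(B)$ becomes a full $\bz_l$-lattice in $V$ stable under $\sigma=\pi_A$; moreover, by the same reasoning, an abstract $\bz_l[G]$-isomorphism between two such lattices is exactly the restriction of an element of $E^{\times}$, where $E=\mathrm{End}_{\bq_l[\sigma]}(V)$ is the commutant algebra (semisimple, since the $\sigma$-action on $V$ is semisimple). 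So the whole statement reduces to: the $\sigma$-stable lattices in $V$ that occur as some $T_l(B)$ lie in finitely many $E^{\times}$-orbits.

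The first key point is boundedness. Choosing in addition a complementary isogeny $\psi\colon B\to A$ with $\psi\phi=[m]_A$, a direct computation on Tate modules gives, inside $V$, a sandwich $l^{c}\,T_l(A)\subseteq T_l(B)\subseteq T_l(A)$ (after rescaling by a power of $l$, which does not change the $\bz_l[G]$-isomorphism class), where $l^{c}$ is the $l$-part of $m$. If $c$ could be taken bounded uniformly over $B\in\mc$ we would be done at once, since only finitely many $\bz_l$-submodules of $T_l(A)$ contain $l^{c}T_l(A)$. To secure such a uniform bound I would bring in polarizations via Zarhin's trick \cite{Zarhin}: $Y_B:=(B\times\wb)^{4}$ is principally polarized for every $B$, and $Y_B$ lies in the isogeny class $\mc'$ of $(A\times\wa)^{4}$; by Krull--Schmidt each $Y\in\mc'$ is $Y_B$ for only finitely many $B$, so it suffices to prove the statement for the principally polarized members of $\mc'$. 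For such a $B$, the Weil pairing is a fixed nondegenerate alternating form $\omega$ on $V$ for which $\sigma$ is a similitude, and a principal polarization forces $T_l(B)$ to be unimodular with respect to $\omega$ up to a unit; rescaling, we may take $T_l(B)$ literally $\omega$-unimodular, which pins down the elementary divisors of $T_l(B)$ relative to $T_l(A)$ and yields the desired boundedness.

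It then remains to see that the $\omega$-unimodular $\sigma$-stable lattices in $V$ form finitely many $E^{\times}$-orbits. Here I would decompose $V$ into its $\sigma$-isotypic pieces, split $\omega$ into the corresponding Hermitian or alternating forms over the local fields occurring in the center of $E$, and invoke the standard finiteness from $p$-adic reduction theory — the self-dual (maximal) lattices for a fixed nondegenerate form over a local field form finitely many orbits under the associated unitary/symplectic group. Combining the three steps gives Lemma \ref{2}, and with Lemma \ref{1} it gives Proposition \ref{mainprop}. The genuine obstacle, I expect, is precisely this control of the lattices: without the polarization input there really can be infinitely many $\sigma$-stable lattices in $V$ up to isomorphism (already for $g\ge 2$ when $\bz_l[\sigma]$ is a non-maximal order of wild type), so the reduction to the unimodular case via Zarhin's trick and Krull--Schmidt, together with the local finiteness for self-dual lattices, is where the real content of the lemma sits.
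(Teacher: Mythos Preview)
Your route is genuinely different from the paper's, and in one respect your diagnosis is off. The paper's proof is pure linear algebra over $\bz_l$ and uses no polarizations, no Zarhin trick, and no reduction theory. For each $B\in\mc$ it builds, inside $T_l(B)$, a sublattice $\oplus N_i$ on which $\pi_B$ has a \emph{fixed} matrix $C$ depending only on $P_{\pi_A}$ (the same ``rational canonical form'' construction as in Lemma~\ref{1}), and shows $\oplus N_i\subseteq T_l(B)\subseteq l^{-s}\oplus N_i$ for an exponent $s=s_1+s_2$ depending only on $P_{\pi_A}$ and $l$ (namely on the denominators in the Bezout identities and on the discriminant $D$). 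Since all the $\oplus N_i$ are then isomorphic to one fixed $\bz_l[G]$-module $\mathcal N$, every $T_l(B)$ is realized as an intermediate lattice between $\mathcal N$ and $l^{-s}\mathcal N$, of which there are only finitely many. The $l=p$ case is handled verbatim with $M(B)$.

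This directly refutes your assertion that ``without the polarization input there really can be infinitely many $\sigma$-stable lattices in $V$ up to isomorphism'': for semisimple $\sigma$ with fixed characteristic polynomial the paper's argument already bounds all such lattices, with no self-duality needed. Your approach through Zarhin's trick, self-dual lattices for the Weil pairing, and local genus theory can certainly be made to work (Krull--Schmidt over the complete local ring $\bz_l$ handles the passage from $(T_l(B)\oplus T_l(\wb))^4$ back to $T_l(B)$, and the finiteness of self-dual lattices in each isotypic piece is standard), but it imports exactly the machinery the paper is advertising it avoids---the abstract says the point is to give an approach different from \cite{Zarhin}. What the paper's argument buys is elementarity and an explicit bound; what yours buys is a more structural picture via the Weil pairing, at the cost of heavier input and a detour through $(B\times\wb)^4$ that is unnecessary here. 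Also, a small correction: neither Theorem~\ref{Tateconj} nor Theorem~\ref{Tatep} is actually used in this lemma---the statement and proof are entirely about lattices in a fixed $V$ with a fixed semisimple operator, and Tate's isomorphism only enters later in Section~5.
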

\begin{proof}
We use the same idea as in Lemma \ref{1}. We collect them here.\\
Let $P_{\pi_A}=\Pi_{i=1}^s P_{\pi_{A_i^{b_i}}}$ where $P_{\pi_{A_i^{b_i}}}$ is a power of an irreducible polynomial. Set $K_i=\Pi_{j\neq i}P_{\pi_{A_j^{b_j}}}$ for $1\leq i\leq s$, then these $K_i(x)$ don't have common factors. So we have $g_i(x)\in\bq[x]$ such that 
$$\Sigma_{i=1}^s g_i(x)K_i(x)=1$$
Fix some $l<N$. Define $M_i=K_i(\pi_A)T_l(A)$. If we set $s_1$ to be the smallest integer such that $l^{s_1}g_i(x)\in\bz_l[x]$, then we have $$l^{s_1}T_l(A)\subseteq\oplus M_i\subseteq T_l(A)$$ Be careful this $s_1$ only depends on $P_{\pi_A}$ and $l$. Then $\pi_A$ acts on $M_i$, and its characteristic polynomial is just $P_{\pi_{A_i^{b_i}}}$. Write $P_{\pi_{A_i^{b_i}}}=(\Pi_{j=1}^{r_i} (x-\alpha_{ij}))^{e_i}$. Define $$P_{ij}=\Pi_{n\neq j} (x-\alpha_{in})$$
for $1\leq i\leq s$ and $1\leq j\leq r_i$. Then by Bezout's theorem, we may find $h_{ij}\in\bar{\bq}[x]$ such that $$\Sigma_{j=1}^{r_i} h_{ij}P_{ij}=1$$
Define $$L_{ij}=P_{ij}(\pi_A)M_i$$
and $\bar{L_{ij}}$ be the $\bar{\bz_l}$ expansion of $L_{ij}$ in $T_l(A)\otimes\bar{\bz_l}$. Then $\bar{L_{ij}}$ is a free module of rank $e_i$. Choose $\{w_{i1},...,w_{ie_i}\}$ such that $P_{i1}(\pi_A)(w_{ij})$ $1\leq j\leq e_i$ is an integral basis of $L_{i1}$. Define $$v_{ij}=\Sigma_{n=1}^{r_i}P_{in}(\pi_A)(w_{ij}),\ 1\leq i\leq s,\ 1\leq j\leq e_1$$
Define $N_i$ to be the submodule of $M_1$ generated by
$$\{v_{i1},\pi_A(v_{i1}),...,\pi_A^{r_i-1}(v_{i1}),.....,v_{ie_i},...,\pi_A^{r_i-1}(v_{ie_i})\}$$ 
Let $D=(\Pi_{i=1}^{t}\Pi_{1\leq j,k\leq r_i}(\alpha_{ij}-\alpha_{ik}))^{2(e_1+e_2+...+e_s)}$, and choose $s_2$ to be the smallest number such that $$l_{s_2}h_{ij}\in\bz_l[x],\frac{l^s}{D}\in\bz$$
Then similar to the proof in Lemma \ref{1} we can see
$$l^{s_2}(\oplus M_i)\subseteq\oplus N_i\subseteq\oplus M_i$$
Then we have
$$\oplus N_i\subseteq T_l(A)\subseteq l^{-s_1-s_2}\oplus N_i$$
Note that the matrix of $\pi_A$ on $\oplus N_i$ is only determined by $P_{\pi_A}$ in the chosen basis. Also the action of $\pi_A$ on $T_l(A)$ is induced from $l^{-s_1-s_2}\oplus N_i$. But $l^{-s_1-s_2}\oplus N_i/\oplus N_i$ is a finite set, so we proved the finiteness of $\{T_l(B)|B\in\mc\}$ if $l\neq p$.

The $l=p$ case is similar as we can see we can do the similar calculus for $W(k)$ module $M(A)$ with the action $\pi_A=L^m$ if $k=\bF_{p^m}$. Then we can see that the set of $M(A)$ with the action of $\pi_A$ is finite, but for fixed $\pi_A$, there are only finitely many choices of $L$ since they must be semi-simple. So we have the set of $M(A)$ with $D_k$ action is a finite set.
\end{proof}

By Proposition \ref{mainprop}, to prove the finiteness of isomorphism classes of abelian varieties, it suffices to show for a fixed abelian variety $A$ of dimension $g$, the set
$$\{B\ an\ abelian\ variety,\ T_l(B)\cong T_l(A)\ as\ \bz_l[G]\ modules\ for\ all\ prime\ l\}$$
is a finite set. Here we include the case $l=p$, which we consider the $D_k$ module $M(A)$. We will show this in the next section.
\section{Finish the Proof}
In this section, we will show the set in the previous section
$$\{B\ an\ abelian\ variety,\ T_l(B)\cong T_l(A)\ as\ \bz_l[G]\ modules\ for\ all\ prime\ l\}$$
is finite with the fixed $A$.

\begin{lem}{\label{isogeny}}
For a prime $l\neq p$, if $T_l(A)\cong T_l(B)$ as a $\bz_l[G]$ module, then there exists an isogeny $\pi:B\to A$ with $(deg\pi,l)=1$.
\end{lem}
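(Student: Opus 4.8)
The plan is to promote the given $\bz_l[G]$-module isomorphism $f\colon T_l(B)\to T_l(A)$ to an honest morphism of abelian varieties by an $l$-adic approximation argument, using Tate's Theorem~\ref{Tateconj}. Applied to the pair $(B,A)$, that theorem gives an isomorphism
$$\bhom_{AV}(B,A)\otimes\bz_l\;\cong\;\bhom_{\bz_l[G]}(T_l(B),T_l(A))$$
induced by $\psi\otimes 1\mapsto T_l(\psi)$, so $f$ is the image of some $\tilde f\in\bhom_{AV}(B,A)\otimes\bz_l$. Since $\bhom_{AV}(B,A)$ is a finitely generated free $\bz$-module, it is dense in its $l$-adic completion $\bhom_{AV}(B,A)\otimes\bz_l$, so I can pick $\phi\in\bhom_{AV}(B,A)$ with $\phi\equiv\tilde f$ modulo $l\cdot(\bhom_{AV}(B,A)\otimes\bz_l)$. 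As the assignment $\psi\mapsto T_l(\psi)$ is $\bz_l$-linear after tensoring, this congruence transports to $T_l(\phi)\equiv f\pmod{l}$ as maps $T_l(B)\to T_l(A)$; concretely, $\phi$ and $f$ induce the same map on $l$-torsion.

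Next I upgrade the congruence to an honest isomorphism. Since $f$ is a $\bz_l$-isomorphism of free $\bz_l$-modules of rank $2g$, its reduction modulo $l$ is an isomorphism of $2g$-dimensional vector spaces; hence so is the reduction of $T_l(\phi)$. By Nakayama's lemma, a $\bz_l$-linear map $T_l(B)\to T_l(A)$ that is surjective modulo $l$ is surjective, and a surjection between free $\bz_l$-modules of equal finite rank is an isomorphism. Therefore $T_l(\phi)\colon T_l(B)\to T_l(A)$ is an isomorphism of $\bz_l[G]$-modules, and a fortiori $V_l(\phi)\colon V_l(B)\to V_l(A)$ is an isomorphism of $\bq_l$-vector spaces.

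It remains to check that $\phi$ is an isogeny $B\to A$ with $(\deg\phi,l)=1$. Factor $\phi$ through its image, an abelian subvariety $B_0\subseteq A$: on rational Tate modules $V_l(B)\to V_l(B_0)$ is surjective and $V_l(B_0)\to V_l(A)$ is injective, and their composite is the isomorphism $V_l(\phi)$, forcing $\dd B_0=g=\dd A$ (and $\dd B=g$ as well, since $T_l(B)\cong T_l(A)$ have equal $\bz_l$-ranks); hence $B_0=A$, so $\phi$ is surjective and therefore an isogeny. Finally, for an isogeny the cokernel of $T_l(\phi)$ is finite of order $l^{v_l(\deg\phi)}$, and here this cokernel vanishes, so $v_l(\deg\phi)=0$, i.e.\ $(\deg\phi,l)=1$; taking $\pi=\phi$ finishes the proof. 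The steps most in need of care are the compatibility of Tate's isomorphism with reduction modulo $l$ (which is what makes $l$-adic proximity of $\phi$ to $\tilde f$ yield $T_l(\phi)\equiv f\pmod l$) and the final identity relating the $l$-adic cokernel of $T_l(\phi)$ to $v_l(\deg\phi)$; both are standard, but I would spell them out.
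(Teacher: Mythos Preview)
Your argument is correct and follows essentially the same route as the paper: invoke Tate's isomorphism, approximate the given $\bz_l[G]$-isomorphism by an honest $\phi\in\bhom_{AV}(B,A)$ using density of $\bz$ in $\bz_l$, observe that $T_l(\phi)$ is then an isomorphism, and read off $(\deg\phi,l)=1$ from the vanishing of the $l$-primary part of $\ker\phi$. You spell out more than the paper does---the Nakayama step and the verification that $\phi$ is actually an isogeny---but the strategy is the same; the paper phrases the last step via the exact sequence $0\to T_l(B)\to T_l(A)\to N_l\to 0$ (Prop.~10.6 of \cite{van2007abelian}), which is exactly your cokernel identification.
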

\begin{proof}
From Tate conjecture, we have the following isomorphism
$$\bhom_{AV}(B,A)\cong\bhom_{\bz_l[G]}(T_l(B),T_l(A))$$
If $\sigma:T_l(B)\to T_l(A)$ the isomorphism, then since $\bz$ is dense in $\bz_l$, so we may find an isogeny $\pi:B\to A$ such that the image of $\pi$ is close to $\sigma$. So the image of $\pi$ is also an isomorphism. Then set $N=Ker\pi$, so we have the exact sequence
$$0\to T_l(B)\to T_l(A)\to N_l\to 0$$
here $N_l$ means the sylow $l$ group of $N$, see \cite{van2007abelian} Prop. 10.6. Since $\pi$ induces isomorphism between Tate modules, so we must have $N_l=\{0\}$, so $(deg\pi,l)=1$.
\end{proof}
\begin{lem}{\label{isop}}
The same holds for $l=p$ case.
\end{lem}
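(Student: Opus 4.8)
The plan is to run the proof of Lemma \ref{isogeny} verbatim, with the $l$-adic Tate module replaced by the Dieudonn\'e module $M(-)$ and Theorem \ref{Tateconj} replaced by Theorem \ref{Tatep}. By hypothesis $M(A)\cong M(B)$ as $D_k$-modules; fix such an isomorphism $\psi$. Theorem \ref{Tatep} gives
\[
\bhom_{AV}(B,A)\otimes\bz_p\;\cong\;\bhom_{D_k}(M(A),M(B)),
\]
and since $\bhom_{AV}(B,A)$ is a finitely generated free $\bz$-module, its image is dense in the left-hand side. Hence for every $n$ I can choose $\pi\in\bhom_{AV}(B,A)$ whose image $M(\pi)\in\bhom_{D_k}(M(A),M(B))$ satisfies $M(\pi)\equiv\psi\pmod{p^n}$.

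First I would check that such a $\pi$ still induces an isomorphism. Choosing $W(k)$-bases of $M(A)$ and $M(B)$ identifies $\psi$ and $M(\pi)$ with matrices in $M_{2g}(W(k))$ with $M(\pi)\equiv\psi\pmod{p^n}$, $n\ge 1$; since $\psi$ is invertible, $\det\psi\in W(k)^\times$, so $\det M(\pi)\equiv\det\psi\not\equiv 0\pmod p$, whence $\det M(\pi)\in W(k)^\times$ and $M(\pi)$ is a $W(k)$-module isomorphism. It is $D_k$-linear because it lies in $\bhom_{D_k}(M(A),M(B))$, so $M(\pi)\colon M(A)\xrightarrow{\ \sim\ }M(B)$ is an isomorphism of $D_k$-modules. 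Next, because $T_l(A)\cong T_l(B)$ for some $l\neq p$ we have $\dd A=\dd B=g$ by Theorem \ref{tateiso}, and if $\pi$ were not surjective it would factor through a proper abelian subvariety $A'\subsetneq A$ (so $\dd A'<g$), forcing $M(\pi)$ to factor through $M(A')$, a $W(k)$-module of rank $2\,\dd A'<2g$; an injective map $M(A)\to M(A')$ is then impossible. Hence $\pi$ is surjective with finite kernel $N$, i.e. an isogeny. Applying the (contravariant) Dieudonn\'e functor to $0\to N[p^\infty]\to B[p^\infty]\to A[p^\infty]\to 0$ gives an exact sequence $0\to M(A)\xrightarrow{M(\pi)}M(B)\to M(N[p^\infty])\to 0$, in which the $W(k)$-length of $M(N[p^\infty])$ equals $v_p(\deg\pi)$ (see \cite{pink2004finite}, \cite{waterhouse1968abelian}). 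Since $M(\pi)$ is surjective, this length is $0$, so $(\deg\pi,p)=1$, as desired.

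The only genuinely delicate input is the last step: exactness of the Dieudonn\'e functor on the short exact sequence of $p$-divisible groups attached to the isogeny, together with the formula identifying the length of $M(N[p^\infty])$ with $v_p$ of the order of $N$. Both are standard in the references already cited, and play exactly the role that \cite{van2007abelian}, Prop.~10.6 plays in the proof of Lemma \ref{isogeny}. The density argument and the determinant computation are routine and parallel the $l\neq p$ case, so I expect no obstacle there.
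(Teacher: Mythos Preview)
Your argument is correct and is precisely the approach the paper takes: its proof simply says the argument is similar to Lemma~\ref{isogeny}, records the exact sequence $0\to N_p\to B[p^\infty]\to A[p^\infty]\to 0$, and cites \cite{van2007abelian}~10.17, which is exactly the input you flag as the one delicate step. One minor point: $\dd A=\dd B$ already follows from $M(A)\cong M(B)$ (equal $W(k)$-rank), so you need not invoke the extraneous hypothesis $T_l(A)\cong T_l(B)$ for some $l\neq p$, which is not part of this lemma in isolation.
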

\begin{proof}
The proof is really similar, the exact sequence is 
$$0\to N_p\to B[p^\infty]\to A[p^\infty]\to 0$$
see \cite{van2007abelian} 10.17
\end{proof}
We can conclude the previous lemma into one property:
\begin{prop}{\label{m}}
Fix an abelian variety $A$. If there exists an abelian variety $B$ such that $T_l(B)\cong T_l(A)$ for $l\neq p$ and $M(B)\cong M(A)$ as $D_k$ modules, then for any prime $l$ (maybe $l=p$), we have an isogeny $\pi_l:B\to A$ such that $deg(\pi)$ is coprime to $l$. 
\end{prop}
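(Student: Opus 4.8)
The plan is to read this proposition as nothing more than a uniform repackaging of Lemmas \ref{isogeny} and \ref{isop}: between them, those two statements already cover every prime $l$, one handling $l\neq p$ through the $\ell$-adic Tate isomorphism of Theorem \ref{Tateconj} and the other handling $l=p$ through the $p$-adic (Dieudonn\'e) version of Theorem \ref{Tatep}. So I would simply fix a prime $l$ and split into the two cases.

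If $l\neq p$, the hypothesis provides an isomorphism $T_l(B)\cong T_l(A)$ of $\bz_l[G]$-modules, which is exactly the input required by Lemma \ref{isogeny}; that lemma then yields an isogeny $\pi_l\colon B\to A$ with $(\deg\pi_l,l)=1$. If $l=p$, the hypothesis instead supplies an isomorphism $M(B)\cong M(A)$ of $D_k$-modules, which is the input to Lemma \ref{isop}, producing an isogeny $\pi_p\colon B\to A$ whose degree is prime to $p$ (the $p$-part of the kernel must vanish because the induced map on $p$-divisible groups is an isomorphism). Combining the two cases gives the claimed family $\{\pi_l\}_{l\text{ prime}}$.

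The main thing to be careful about is what the statement does \emph{not} say: the isogeny $\pi_l$ genuinely depends on $l$, and there is in general no single isogeny $B\to A$ coprime to all primes at once, so the proposition must be applied one prime at a time (as it will be in the next section). The only real subtlety living inside the two lemmas — and hence the only place an objection could arise — is the density/openness step: one approximates the given isomorphism of $\bz_l$- (resp.\ $W(k)$-) modules by an honest element of $\bhom_{AV}(B,A)$ using that $\bz$ is dense in $\bz_l$, and one needs that ``being an isomorphism'' is an open condition, which holds because it is detected by the nonvanishing of a determinant. Lemma \ref{isop} asserts that this argument goes through verbatim over the possibly noncommutative ring $D_k$; I would want to confirm that point, but it is precisely what the cited references \cite{waterhouse1968abelian}, \cite{van2007abelian} supply, so I do not expect any genuine obstacle here.
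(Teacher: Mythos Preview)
Your proposal is correct and matches the paper's approach exactly: the paper does not even write out a separate proof, introducing the proposition with the sentence ``We can conclude the previous lemma into one property,'' so Proposition~\ref{m} is intended precisely as the uniform repackaging of Lemma~\ref{isogeny} (the case $l\neq p$) and Lemma~\ref{isop} (the case $l=p$) that you describe. Your additional remarks about the dependence of $\pi_l$ on $l$ and the density/openness step are accurate context but go beyond what the paper records here.
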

We need a technique lemma.

\begin{lem}{\label{tech}}
If we have two abelian varieties $A$ and $B$ and two isogenies $\pi_1:B\to A$ and $\pi_2:B\to A$. If we have two integers $m_1,m_2$ such that $(m_1,m_2)=1$ and $(m_1,deg\pi_1)=(m_2,deg\pi_2)=1$, then we have an isogeny $\pi:B\to A$ such that $(deg\pi,m_1m_2)=1$.
\end{lem}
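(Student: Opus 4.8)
The plan is to produce the desired isogeny as an explicit integral combination of $\pi_1$ and $\pi_2$, and then to check, prime by prime, that its degree misses every prime dividing $m_1 m_2$. Concretely I would take
$$\pi := m_2\pi_1 + m_1\pi_2 \in \bhom_{AV}(B,A).$$
First I would clear away the trivial cases: if $m_1=1$ then $\pi_2$ already satisfies $(\deg\pi_2,m_1m_2)=(\deg\pi_2,m_2)=1$, and symmetrically if $m_2=1$; so from now on assume $m_1,m_2\ge 2$.

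The heart of the argument is a torsion computation. Recall --- from the exact sequence $0\to T_l(B)\to T_l(A)\to \ker(\phi)_l\to 0$ used in Lemma \ref{isogeny}, together with its Dieudonne-module analogue from Lemma \ref{isop} in the case $l=p$ --- that for an isogeny $\phi:B\to A$ and a prime $l$ one has $l\nmid\deg\phi$ if and only if $\phi$ restricts to an isomorphism $B[l]\cong A[l]$ on $l$-torsion (to be read as finite group schemes, resp. $p$-divisible groups, when $l=p$). Now fix a prime $l\mid m_1$. On $B[l]$ the endomorphism $[m_1]$ is zero, so $\pi|_{B[l]}=[m_2]\circ(\pi_1|_{B[l]})$; since $(m_1,m_2)=1$ we have $l\nmid m_2$, so $[m_2]$ is an automorphism of $A[l]$; and $l\nmid\deg\pi_1$ by hypothesis, so $\pi_1|_{B[l]}$ is an isomorphism. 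Hence $\pi|_{B[l]}$ is an isomorphism, i.e. $l\nmid\deg\pi$. Running the same argument with $\pi_1,\pi_2$ (and $m_1,m_2$) interchanged disposes of every prime $l\mid m_2$. Thus $\deg\pi$ is coprime to $m_1m_2$ --- provided $\pi$ is an isogeny in the first place.

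That last point needs a separate check, since a priori $m_2\pi_1+m_1\pi_2$ could be degenerate. As $m_1,m_2\ge 2$ are coprime, $m_1m_2$ has at least two distinct prime divisors, so there is a prime $\ell\mid m_1m_2$ with $\ell\ne p$. By the previous paragraph $\pi$ induces an isomorphism $B[\ell]\cong A[\ell]$, hence (Nakayama, the Tate modules being free of equal rank) an isomorphism $T_\ell(B)\cong T_\ell(A)$, hence an isomorphism on $V_\ell=T_\ell\otimes\bq_\ell$; since $A$ and $B$ are isogenous via $\pi_1$ and so have equal dimension, a homomorphism between them that is an isomorphism on some $V_\ell$ has finite kernel and is surjective, i.e. is an isogeny. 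This completes the plan.

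The one place I expect to need care is the $l=p$ bookkeeping, where ``$l$-torsion'' must be replaced by the $p$-divisible group / Dieudonne module and the criterion ``$l\nmid\deg\phi\iff\phi$ an isomorphism on $l$-torsion'' rests on Dieudonne theory (Theorem \ref{Tatep}) rather than on etale Tate modules; everything else is just the elementary remark that $[m_1]$ kills $B[l]$ as soon as $l\mid m_1$, so reducing $\pi$ modulo such an $l$ simply deletes one of its two summands.
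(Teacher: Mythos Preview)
Your proof is correct and follows the same strategy as the paper: define $\pi=m_2\pi_1+m_1\pi_2$, verify it is an isogeny by showing it induces an isomorphism on some $T_\ell$, and then check prime by prime that no $l\mid m_1m_2$ divides $\deg\pi$. The only cosmetic differences are that the paper phrases the coprimality check as a contradiction on a nonzero $x\in\ker\pi\cap B[l]$ rather than via the criterion ``$l\nmid\deg\phi\iff\phi|_{B[l]}$ is an isomorphism'', and that you handle the degenerate cases $m_i=1$ and the $l=p$ bookkeeping a bit more explicitly than the paper does.
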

\begin{proof}
Set $\pi=m_2\pi_1+m_1\pi_2$. First we show $\pi$ is an isogeny. So pick some $l\mid m_1$, then consider the image of $\pi$ in $\bhom_{\bz_l[G]}(T_l(B),T_l(A))$ under the Tate isomorphism. We can see by condition $m_2\pi_1$ under this isomorphism induces an isomorphism since $(m_2deg\pi_1,l)=1$, and $\pi$ and $m_2\pi_1$ are differ by $l$ times some homomorphism, so we have $\pi$ is an isomorphisms of Tate modules, so $\pi$ is an isogeny.

If $(deg\pi,m_1m_2)\neq1$, then there exists some $x\in Ker\pi\cap B[m_1m_2]$. Then by replacing $x$ by some multiple, we may assume there exists some prime factor $l$ of $m_1m_2$, just say a prime factor of $m_1$ (the case of $m_2$ is the same), such that $x\neq 0$, $lx=0$ and $x\in Ker\pi$. But then we have $$0=\pi(x)=m_2\pi_1(x)+m_1\pi_2(x)=m_2\pi_1(x)$$
so $x\in Ker(m_2\pi_1)$, so $x\in Ker(m_2\pi_1)\cap B[l]=\{0\}$, which is a contradiction. So this $\pi$ satisfies our requirements.
\end{proof}

Now we comes to the last lemma.

\begin{lem}{\label{mainlem}}
Fix an abelian variety $A$. If there exists an abelian variety $B$ such that $T_l(B)\cong T_l(A)$ for $l\neq p$ and $M(B)\cong M(A)$ as $D_k$ modules, then $B$ is a direct component of $A\times A$. Here direct component means we have an abelian variety $C$ such that $B\times C\cong A\times A$
\end{lem}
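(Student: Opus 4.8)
\medskip

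\noindent\textbf{Proof proposal.} The plan is to exploit the abundance of isogenies $B\to A$ furnished by Proposition~\ref{m} together with Lemma~\ref{tech} to realize $B$ simultaneously as a sub- and as a quotient-variety of $A\times A$ in a \emph{split} way, and then to peel off the complementary factor by an idempotent argument. Concretely, I want to build a homomorphism $f\colon B\to A\times A$ and a homomorphism $g\colon A\times A\to B$ with $g\circ f=\mathrm{id}_B$; then $e=f\circ g$ is an idempotent in $\bhom_{AV}(A\times A,A\times A)$ and $A\times A$ splits as $\mathrm{im}(e)\times\mathrm{im}(1-e)$ with $\mathrm{im}(e)\cong B$.

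First I would produce one isogeny $\pi_1\colon B\to A$ whose degree is coprime to that of a chosen reference isogeny. Pick any isogeny $\pi_0\colon B\to A$ (it exists by Proposition~\ref{m}); let $n=\deg\pi_0$ and let $l_1,\dots,l_r$ be its prime divisors. For each $i$, Proposition~\ref{m} gives an isogeny $B\to A$ of degree prime to $l_i$; feeding these into Lemma~\ref{tech} successively — first with $m_1=l_1,\ m_2=l_2$, then with $m_1=l_1l_2,\ m_2=l_3$, and so on — yields an isogeny $\pi_1\colon B\to A$ with $\gcd(\deg\pi_1,n)=1$. Set $m=\deg\pi_1$ and choose $u,v\in\bz$ with $un+vm=1$. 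Now let $\hat\pi_0\colon A\to B$ and $\hat\pi_1\colon A\to B$ be the dual isogenies, so that $\hat\pi_0\pi_0=[n]_B$, $\pi_0\hat\pi_0=[n]_A$, $\hat\pi_1\pi_1=[m]_B$, $\pi_1\hat\pi_1=[m]_A$, and put
$$f=(u\pi_0,\ v\pi_1)\colon B\longrightarrow A\times A,\qquad g\colon A\times A\longrightarrow B,\quad g(x,y)=\hat\pi_0(x)+\hat\pi_1(y).$$
A direct computation gives $g\circ f=u\,\hat\pi_0\pi_0+v\,\hat\pi_1\pi_1=[un+vm]_B=\mathrm{id}_B$. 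Hence the scheme-theoretic kernel of $f$ is killed by $g$, so $f$ is a closed immersion identifying $B$ with the abelian subvariety $f(B)\subseteq A\times A$, and $g$ is a retraction onto it.

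Finally, set $e=f\circ g$. From $gf=\mathrm{id}_B$ we get $e^2=f(gf)g=fg=e$, so $e$ and $1-e$ are orthogonal idempotents; one checks readily that $\mathrm{im}(e)=f(B)\cong B$ (any $b$ satisfies $f(b)=e(f(b))$) and that $y\mapsto(e(y),(1-e)(y))$ is an isomorphism $A\times A\xrightarrow{\ \sim\ }\mathrm{im}(e)\times\mathrm{im}(1-e)$ with inverse $(a,b)\mapsto a+b$. Taking $C=\mathrm{im}(1-e)$ we conclude $A\times A\cong B\times C$, which is the assertion.

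The only point requiring care is that the B\'ezout relation $un+vm=1$ must hold over $\bz$, so that $g$ is an honest homomorphism rather than an isogeny divided by a scalar; this is precisely why one must first arrange $\gcd(\deg\pi_0,\deg\pi_1)=1$ by combining the isogenies of Proposition~\ref{m} via Lemma~\ref{tech}, and it is the main thing that does not come for free. In characteristic $p$ one should also read ``trivial kernel'' scheme-theoretically, but this is automatic here because $gf=\mathrm{id}$ annihilates $\ker f$ functorially; the remaining ingredients (existence of dual isogenies, images of homomorphisms of abelian varieties being abelian subvarieties, and the product splitting attached to an idempotent endomorphism) are standard.
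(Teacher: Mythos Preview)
Your argument is correct and follows essentially the same route as the paper's proof: produce two isogenies $B\to A$ of coprime degree via Proposition~\ref{m} and iterated use of Lemma~\ref{tech}, pair them with the ``back'' isogenies $A\to B$, and use a B\'ezout relation to obtain a section $A\times A\to B$ of the diagonal map $B\to A\times A$. The only cosmetic differences are that the paper places the B\'ezout scalars $s,t$ on the retraction $A\times A\to B$ rather than on the inclusion, and then simply invokes the standard idempotent splitting (as in \cite{milne1986abelian}) instead of spelling it out; your explicit treatment of $e=f\circ g$ is perfectly fine.
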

\begin{proof}
Choose any isogeny $\pi_1:B\to A$, then by Lemma \ref{tech}, Prop \ref{m} and the induction procedure, we may find an isogeny $\pi_2$ such that $(deg\pi_1,deg\pi_2)=1$, then we have an embedding 
$$g:B\to A\times A$$
$$b\to(\pi_1(b),\pi_2(b))$$
Then we may find isogenies $\phi_1:A\to B$ and $\phi_2:A\to B$ such that $$\phi_1\pi_1=deg\pi_1$$  $$\phi_2\pi_2=deg\pi_2$$
since $(deg\pi_1,deg\pi_2)=1$, so there exists $s,t\in\bz$ such that $sdeg\pi_1+tdeg\pi_2=1$, define $$f:A\times A\to B$$
$$(a_1,a_2)\to s\phi_1(a_1)+t\phi_2(a_2)$$
Then we have $fg=1_B$, so $B$ is a direct factor of $A\times A$.
\end{proof}
Now we can finish our proof.
\begin{theorem}
There are only finitely abelian varieties of dimension $g$ over $k$.
\end{theorem}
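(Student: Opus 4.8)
The plan is to assemble the three main inputs already proved---Corollary \ref{iso} (finitely many isogeny classes of dimension $g$), Proposition \ref{mainprop} (finitely many Tate/Dieudonn\'e module types inside each isogeny class), and Lemma \ref{mainlem} (an abelian variety with the same Tate data as $A$ is a direct factor of $A\times A$)---and then to reduce the whole statement to the purely algebraic fact that a \emph{fixed} abelian variety has only finitely many direct factors up to isomorphism.

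First I would invoke Corollary \ref{iso}: since there are only finitely many isogeny classes of abelian varieties of dimension $g$ over $k$, it suffices to fix one isogeny class $\mc$ and bound the number of isomorphism classes inside it. Pick any $A\in\mc$ and let $N$ be the integer produced by Proposition \ref{mainprop}. That proposition says that for $l>N$ every $B\in\mc$ has $T_l(B)\cong T_l(A)$ as $\bz_l[G]$-modules, while for the finitely many primes $l\le N$ (including $l=p$, where one reads $M(-)$ instead of $T_p(-)$) the datum consisting of the modules $(T_l(B))_{l\le N,\,l\neq p}$ together with $M(B)$ takes only finitely many values up to isomorphism as $B$ ranges over $\mc$. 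Hence there is a finite list $A_1,\dots,A_r\in\mc$ such that every $B\in\mc$ satisfies $T_l(B)\cong T_l(A_j)$ for all $l\neq p$ and $M(B)\cong M(A_j)$ as $D_k$-modules, for some $j$.

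Next I would feed this into Lemma \ref{mainlem}, whose hypotheses are exactly what the previous step supplies once the ``$A$'' there is taken to be $A_j$: every $B\in\mc$ is a direct factor of $A_j\times A_j$ for some $j\in\{1,\dots,r\}$. As there are only $r$ such varieties $A_j$, the finiteness of $\mc$ reduces to the claim that for a fixed abelian variety $X$ over $k$ there are, up to isomorphism, only finitely many abelian varieties $B$ admitting some $C$ with $B\times C\cong X$. To prove this I would pass to $R=\bhom_{AV}(X,X)$, which is a $\bz$-order in the semisimple $\bq$-algebra $R\otimes\bq$ and, in particular, a finitely generated free $\bz$-module of rank bounded in terms of $\dd X$. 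A decomposition $B\times C\cong X$ produces an idempotent $e\in R$ (projection onto the $B$-factor) with $B\cong eX$, and $eX\cong e'X$ as abelian varieties if and only if $Re\cong Re'$ as left $R$-modules; so the isomorphism classes of direct factors of $X$ are in bijection with certain isomorphism classes of left ideals $Re\subseteq R$ with $e^2=e$. Each such $Re$ is a finitely generated torsion-free $\bz$-module of rank at most $\mathrm{rank}_{\bz}R$, so the Jordan--Zassenhaus theorem---finiteness of the number of isomorphism classes of $R$-lattices of bounded rank over a $\bz$-order---implies there are only finitely many of them. This Jordan--Zassenhaus input is the only genuinely nonformal ingredient and is where I expect the real work of the last step to lie; the rest is bookkeeping. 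Combining everything: each isogeny class $\mc$ contains finitely many isomorphism classes, and there are finitely many $\mc$, so there are only finitely many abelian varieties of dimension $g$ over $k$.
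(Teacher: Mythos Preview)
Your proof is correct and follows essentially the same route as the paper: combine Corollary~\ref{iso}, Proposition~\ref{mainprop}, and Lemma~\ref{mainlem} to reduce to the statement that a fixed abelian variety has only finitely many direct factors up to isomorphism. The paper simply cites this last fact as \cite{milne1986abelian}~Theorem~18.7, whereas you unpack its proof via the endomorphism order and Jordan--Zassenhaus; that is exactly how Milne's result is proved, so the two arguments coincide.
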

\begin{proof}
By \cite{milne1986abelian} Theorem 18.7, for a fixed abelian variety, there are only finitely many direct components, then the theorem follows from Corollary \ref{iso}, Prop \ref{mainprop} and Lemma \ref{mainlem}.
\end{proof}
\bibliographystyle{abbrv}
\bibliography{MyCitation}

\end{document}